\setlist[description]{font=\rmfamily\mdseries,labelindent=\parindent}
\crefname{equation}{}{}
\crefname{enumi}{}{}
\theoremstyle{definition}
\newtheorem{dfn}{Definition}[section]
\newtheorem{thm}[dfn]{Theorem}
\newtheorem{prop}[dfn]{Proposition}
\newtheorem{lem}[dfn]{Lemma}
\newtheorem{cor}[dfn]{Corollary}
\newtheorem{remark}[dfn]{Remark}
\newtheorem*{dfn*}{Definition}
\newtheorem{manualbackend}[dfn]{\!}
\newenvironment{manual}[1]{
  
  \addtocounter{dfn}{-1}
  \begin{manualbackend}
  }{
  \end{manualbackend}
}
\theoremstyle{remark}
\newtheorem{zbbackend}[dfn]{Example}
\newenvironment{example}[1][]{\begin{zbbackend}[#1]}{\hspace*{\fill}$\diamondsuit$\end{zbbackend}}
\newtheorem{rmk}[dfn]{Remark}
\DeclareMathOperator{\DP}{DP} 
\DeclareMathOperator{\rk}{rk} 
\DeclareMathOperator{\cl}{cl} 
\DeclareMathOperator{\codim}{codim}
\DeclareMathOperator{\cone}{cone}
\DeclareMathOperator{\im}{im}
\DeclareMathOperator{\Lowest}{Lowest}
\newcommand{\geo}{{\hspace{0.05ex}\mathrm{geo}}}
\newcommand{\ph}{\varphi}
\newcommand{\Q}{\mathbb{Q}} 
\newcommand{\R}{\mathbb{R}} 
\newcommand{\Z}{\mathbb{Z}} 
\renewcommand{\L}{\mathscr{L}} 
\newcommand{\G}{\mathscr{G}} 
\newcommand{\N}{\mathscr{N}} 
\newcommand{\cS}{\mathscr{S}} 
\newcommand{\sym}{\mathfrak{S}}
\renewcommand{\P}{\mathbb{P}} 
\newcommand{\e}{\mathbf{e}}
\newcommand{\word}[1]{\textit{#1}}
\title{The Bergman fan of a polymatroid}
\author{Colin Crowley}
\address{
University of Oregon
}
\email{crowley@uoregon.edu}
\author{June Huh}
\address{
Princeton University and Korea Institute for Advanced Study
}
\email{huh@princeton.edu}
\author{Matt Larson}
\address{
Stanford University
}
\email{mwlarson@stanford.edu}
\author{Connor Simpson}
\address{
University of Wisconsin–Madison
}
\email{csimpson6@wisc.edu}
\author{Botong Wang}
\address{
University of Wisconsin–Madison
}
\email{wang@math.wisc.edu}
\begin{document}

\begin{abstract}
    We introduce the Bergman fan of a polymatroid and prove that the Chow ring of the Bergman fan is isomorphic to the Chow ring of the polymatroid. Using the Bergman fan, we establish the K\"ahler package for the Chow ring of the polymatroid, recovering and strengthening a result of  Pagaria--Pezzoli. 
\end{abstract}

\maketitle

\section{Introduction}\label{intro}

By definition, a \emph{matroid} on a finite set $E$ is given by a rank function
$ \rk\colon  2^E \to \Z_{\geq 0} $
satisfying the following:
\begin{enumerate}[]\itemsep 5pt
\item[] (Submodularity) For any $A_1,A_2 \subseteq E$, we have $\rk(A_1 \cup A_2) + \rk(A_1 \cap A_2) \leq \rk(A_1) + \rk(A_2)$.
\item[] (Monotonicity) For any $A_1 \subseteq A_2 \subseteq E$, we have $\rk(A_1) \leq \rk(A_2)$.
\item[] (Boundedness) For any $A \subseteq E$, we have $\rk(A) \leq |A|$.
\item[] (Normalization) The rank of the empty subset is zero.
\end{enumerate}
Sans ``boundedness'', the axioms above define a \textit{polymatroid}.
Throughout this paper, we assume that the polymatroid is \emph{loopless}:
\begin{enumerate}[]\itemsep 5pt
\item[] (Looplessness) The rank of any nonempty subset is nonzero.
\end{enumerate}
If $P$ is a polymatroid on $E$, then its \emph{rank} is $\rk(P) \coloneqq \rk(E)$.
A \textit{flat} of $P$ is a subset $F \subseteq E$ that is maximal among sets of its rank.
Ordered by inclusion, the flats of $P$ form a lattice $\L_P$.\footnote{Unlike in the case of matroids, the lattice of flats of a polymatroid can fail to be graded or atomic.} 
The intersection of two flats is a flat, so any subset $A$ of $E$ is contained in a unique minimal flat $\cl_P(A)$, called the \emph{closure} of $A$ in $P$, which is obtained by intersecting all flats that contain $A$.

Matroids can be viewed as combinatorial abstractions of hyperplane arrangements.
More generally, polymatroids can be viewed as combinatorial abstractions of subspace arrangements.

\begin{example}\label{subspace}
  Let $V_1, \ldots, V_n$ be linear subspaces of a vector space $V$ over a field $\mathbb{F}$.
  The rank function
  \[ \rk(A) \coloneqq \codim_V(\cap_{i \in A} V_i) \]
   defines a polymatroid $P$ on the set of indices $E=\{1,\ldots,n\}$, which is a matroid if and only if every $V_i$ is a hyperplane.
   The map
   $F \mapsto \cap_{i \in F} V_i$
   is a bijection between the flats of $P$ and the subspaces of $V$ obtained by intersecting some of the $V_i$'s.
   A polymatroid arising in this way is said to be \word{realizable} over $\mathbb{F}$, and the subspace arrangement is called a \emph{realization} of $P$ over $\mathbb{F}$. 
\end{example}

Much of a hyperplane arrangement's combinatorial data is captured by intersection theory on its wonderful compactification \cite{dCP95}.
To extend this from hyperplane arrangements to non-realizable matroids, one must replace the wonderful compactification with a combinatorial object, the Bergman fan of a matroid.
The purpose of the present paper is to introduce the \emph{Bergman fan of a polymatroid}, a combinatorial model for the wonderful compactification of a subspace arrangement.
As in the case of matroids, the Bergman fan of a polymatroid is a tropical variety of degree one.
In Section \ref{sec:chow}, we show that the Chow ring of a polymatroid satisfies the K\"{a}hler package with respect to any strictly convex piecewise linear function on its Bergman fan, 
 recovering and strengthening a result of Pagaria and Pezzoli \cite[Theorems 4.7 and 4.21]{PP21}.

Our construction of the Bergman fan is inspired by a geometric observation: over an infinite field, the wonderful compactification of any subspace arrangement can be realized as the wonderful compactification of a hyperplane arrangement, taken with respect to an appropriate building set (\cref{rmk:geomOfBS}).
The construction immediately reveals that the Bergman fan of a polymatroid and the Bergman fan of the associated matroid have the same support.
Thus, the K\"{a}hler package for the polymatroid follows from that of the associated matroid \cite{AHK18}
and the general fact that the validity of the K\"{a}hler package for the Chow ring of a fan depends only on the support of the fan \cite{ADH20}.

\subsection{The Bergman fan of a Boolean polymatroid}\label{intro:boolean}
An important special case is that of Boolean polymatroids.
Let $\pi\colon \widetilde E \to E$ be a surjective map between finite sets.
The \textit{Boolean polymatroid} $B(\pi)$ is the polymatroid on $E$ defined by the rank function 
\[
\rk_{B(\pi)}(A) = |\pi^{-1}(A)| \ \ \text{for $A \subseteq E$.}
\]
We write $N_{\widetilde E}$ for $\mathbb{Z}^{\widetilde E} / \mathbb{Z}(1,1,\ldots,1)$,
and, for a subset $S$ of $\widetilde E$, write $\e_S$ for the vector
$\sum_{i \in S} \e_i$ in $N_{\widetilde E} \otimes \R$.

\begin{dfn}\label{dfn:booleanBergman}
 The \word{Bergman fan} $\Sigma_{B(\pi)}$ of the Boolean polymatroid $B(\pi)$ is the fan in $N_{\widetilde E}\otimes \R$ with cones
  \[
    \sigma_{\mathscr F, A} \coloneqq \cone( \e_{\pi^{-1}(F_1)}, \ldots, \e_{\pi^{-1}(F_k)}) + \cone(\e_i)_{i \in S},
  \]
  for every chain $\mathscr{F} = \{\emptyset \subsetneq F_1 \subsetneq F_2 \subsetneq \cdots \subsetneq F_k \subsetneq E \}$
  and subset $S$ of $\widetilde E$ not containing any fiber of $\pi$.
\end{dfn}

Throughout the paper, we write $n$ for the cardinality of $E$.

\begin{dfn}
  An \emph{ordered transversal} of $\pi$ is a sequence $s_1,\ldots,s_n$ of elements of $\tilde{E}$ such that each fiber of $\pi$ contains exactly one element of the sequence.
  The \textit{polypermutohedron}  $Q(\pi)$ is the convex hull of the vectors
  $\sum_{i =1}^n i \e_{s_i}$ in $\R^{\widetilde E}$, 
    where $s_1, \ldots, s_n$ range over all ordered transversals of $\pi$.
\end{dfn}

In \cref{appendix:polytope}, we show $\Sigma_{B(\pi)}$ is the inner normal fan of the polypermutohedron $Q(\pi)$. It follows immediately from Definition \ref{dfn:booleanBergman} that $\Sigma_{B(\pi)}$ is a complete unimodular fan in $N_{\widetilde E}$.

\begin{example}
When $\pi$ is a bijection, an ordered traversal of $\pi$ is a permutation of $\tilde{E}$, and $Q(\pi)$ is the standard permutohedron in $\R^{\widetilde E}$. This recovers a familiar statement: the Bergman fan of the Boolean matroid is the normal fan of the standard permutohedron.
\end{example}
\begin{example}
When $E$ is a singleton, an ordered transversal of $\pi$ is an element $\tilde{E}$, and $Q(\pi)$ is the standard simplex in $\R^{\widetilde E}$.
Thus, the Bergman fan of a Boolean polymatroid on a singleton is the normal fan of the standard simplex.
\end{example}

\begin{example}
When $|E| = n-1$ and all fibers of $\pi$ have size $d$, the toric variety corresponding to $\Sigma_{B(\pi)}$ is a  generalization of the Losev--Manin space of curves which compactifies the moduli space of configurations of $n$ points in $\mathbb{A}^d$ up to translation and scaling \cite[Corollary 5.6]{GallardoRoutis}. 
\end{example}

\subsection{The Bergman fan of a polymatroid}

Let $P$ be a polymatroid on $E$, and let $\pi\colon \widetilde E \to E$ be a surjective map satisfying $\rk_P(i)=|\pi^{-1}(i)|$ for every $i$ in $E$.

\begin{dfn}\label{dfn:maximalBergman}
   The \word{Bergman fan} $\Sigma_P$ of the polymatroid $P$ is the subfan of $\Sigma_{B(\pi)}$ with cones
  \[
    \sigma_{\mathscr F, S} \coloneqq \cone( \e_{\pi^{-1}(F_1)}, \ldots, \e_{\pi^{-1}(F_k)}) + \cone(\e_i)_{i \in S},
  \]
  one for every chain of flats $\mathscr F = \{\emptyset = F_0 \subsetneq F_1 \subsetneq F_2 \subsetneq \cdots \subsetneq F_k \subsetneq E \}$ of $P$ and a subset $S$ of $\widetilde E$ such that 
\[
\rk(F \cup \pi(T)) > \rk(F) + |T| \ \ \text{for every proper flat $F$ in $\mathscr{F}$ and every nonempty subset $T$ of $S\setminus \pi^{-1}(F)$.}
\]
\end{dfn}

The Bergman fan $\Sigma_P$ is unimodular with respect to $N_{\widetilde E}$, defining a smooth toric variety $X_P$ over $\mathbb{C}$. We write $A(\Sigma_P)$ for the Chow ring of $X_P$. 
We relate $A(\Sigma_P)$ to the \emph{Chow ring} of the polymatroid $P$ (Definition \ref{dfn:ppchow}), denoted $\DP(P)$, introduced in \cite[Section 4]{PP21}. Our main result states the following.
 
\begin{manual}{\cref{thm:isomorphism}}
 There is a natural isomorphism of graded rings $\DP(P) \cong A(\Sigma_P)$.
\end{manual}

In \cref{cor:gb}, we use \cref{thm:isomorphism} to recover a Gr\"obner basis for $\DP(P)$ found in \cite{PP21}. 
In \cref{cor:hodge}, we prove the K\"{a}hler package for $A(\Sigma_P)$ with respect to the cone of strictly convex piecewise linear function on $\Sigma_P$, extending the K\"{a}hler package for $\DP(P)$ with respect to the $\sigma$-cone in  \cite{PP21}. See Remark \ref{ConeRemark} for a comparison of the two cones.

\subsection{Building sets}
In fact, our results hold for polymatroids $P$ equipped with a \word{geometric building set} $\G$ (\cref{sec:buildingsets}). The statements above are specializations of our results to the case when $\G$ consists of all nonempty flats of $P$.
In maximal generality, we define the \word{Bergman fan} of $(P, \G)$, denoted $\Sigma_{P,\G}$ (\cref{dfn:bergmanfan}).
The Chow ring $\DP(P, \G)$ associated to $(P, \G)$ was introduced in \cite{PP21}, and it is isomorphic to 
$A(\Sigma_{P, \G})$ (\cref{thm:isomorphism}).
All corollaries continue to hold, including the K\"{a}hler package for $\DP(P, \G)$.

\subsection*{Organization}
In \cref{sec:multisymmetric}, we develop the combinatorics of multisymmetric matroids and lifts, a key tool throughout this paper.
We use lifts to define the Bergman fan of a polymatroid (with respect to a geometric building set) in \cref{sec:buildingsets}.
Finally, in \cref{sec:chow}, we show that the Chow ring of the Bergman fan agrees with the polymatroid Chow ring of \cite{PP21} and derive consequences.
Examples \ref{subspace} and \ref{rmk:why-geometric}, and Remarks \ref{rmk:geomOfBS}, \ref{tropicalization}, and \ref{rmk:geoProof} explain the geometry underlying this work.

\subsection*{Acknowledgements}
We thank Spencer Backman for comments and conversations. The second author is partially supported by a Simons Investigator Grant and NSF Grant DMS-2053308, the third is supported by an NDSEG fellowship, and the last is supported by a Sloan fellowship.

\section{Multisymmetric matroids}\label{sec:multisymmetric}
Many proofs in \cref{sec:chow} reduce statements about polymatroids to known statements about matroids.
The key tool for this reduction is multisymmetric matroids, a new cryptomorphic formulation of polymatroids. 
Let $\sym_{\widetilde{E}}$ denote the symmetric group on a finite set $\widetilde{E}$. We continue to assume that all  (poly)matroids are loopless.
\begin{dfn}\label{dfn:multisymmetric}
  A \emph{multisymmetric matroid} is a matroid $M$ on $\widetilde{E}$ equipped with a partition
  $\widetilde{E} = \widetilde{E}_1 \sqcup \cdots \sqcup \widetilde{E}_n$
  such that the action of $\Gamma = \sym_{\widetilde{E}_1} \times \cdots \times \sym_{\widetilde{E}_n}$ on $\widetilde{E}$ takes flats to flats.
The \word{geometric part} of a subset $S \subseteq \widetilde{E}$ is $S^\geo \coloneqq \cap_{\gamma \in \Gamma} (\gamma \cdot S)$. We call a subset $S\subseteq \widetilde{E}$ \word{geometric} if $S = S^\geo$.
\end{dfn}

For a multisymmetric matroid $M$, we write $\L_M^\Gamma$ for its poset of geometric flats.

\begin{example}
  Any matroid $M$ on $\widetilde{E}$ can be given the \word{trivial} multisymmetric structure by setting $\Gamma = \prod_{e \in \widetilde{E}} \sym_{\{e\}}$. In this case, $\L_M^\Gamma = \L_M$.
\end{example}
\begin{example}\label{ex:uniform}
  If $M$ is multisymmetric on $\widetilde{E} = \widetilde{E}_1$, then $M$ is a uniform matroid, and $\L_M^\Gamma$ is $\{\emptyset, \widetilde{E}\}$.
\end{example}
\begin{example}
  Let $\widetilde{E}$ be the set of edges of the complete graph $K_4$, and let $M$ be the graphic matroid of $K_4$.
  No transposition of $\sym_{\widetilde{E}}$ preserves the flats of $M$, so $M$ has no non-trivial multisymmetric structures.
\end{example}

Closure in a multisymmetric matroid is restricted by the group action.
\begin{lem}\label{cor:union}
  If $M$ is multisymmetric and $S \subseteq \widetilde{E}$ is geometric, then $\cl_M(S)$ is also geometric.
\end{lem}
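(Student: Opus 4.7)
The plan is to first reinterpret the condition ``geometric'' in more manageable terms, then exploit the hypothesis that $\Gamma$ permutes flats.

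First I would observe that a subset $S \subseteq \widetilde{E}$ is geometric if and only if it is $\Gamma$-invariant. Indeed, an element $x$ lies in $\cap_{\gamma \in \Gamma}\gamma \cdot S$ precisely when $\gamma^{-1}\cdot x \in S$ for all $\gamma \in \Gamma$, i.e., when the entire $\Gamma$-orbit of $x$ is contained in $S$. Thus $S^{\geo}$ is the union of all $\Gamma$-orbits contained in $S$, and $S = S^{\geo}$ exactly when $S$ is a union of orbits.

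Next, the key step is to show that $\Gamma$ preserves the closure operator, i.e., for every $\gamma \in \Gamma$ and $S \subseteq \widetilde{E}$,
\[
\gamma \cdot \cl_M(S) = \cl_M(\gamma \cdot S).
\]
This follows because $\cl_M(S)$ is by definition the intersection of all flats containing $S$, and applying $\gamma$ to this intersection gives $\bigcap_{F \supseteq S} (\gamma \cdot F)$. By hypothesis $\gamma$ permutes flats, so as $F$ ranges over flats containing $S$, $\gamma \cdot F$ ranges over flats containing $\gamma \cdot S$; the intersection is thus $\cl_M(\gamma \cdot S)$.

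Finally, I would combine the two observations: if $S$ is geometric then $\gamma \cdot S = S$ for every $\gamma \in \Gamma$, so $\gamma \cdot \cl_M(S) = \cl_M(S)$, showing that $\cl_M(S)$ is $\Gamma$-invariant, hence geometric. There is no real obstacle here; the content of the lemma is essentially the (easy) verification that a group acting by closure-operator-preserving bijections carries invariant sets to invariant closures, and the only thing to check carefully is the passage from ``$\Gamma$ takes flats to flats'' to ``$\Gamma$ commutes with closure,'' which is immediate from the intersection-of-flats description of $\cl_M$.
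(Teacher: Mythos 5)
Your proof is correct and follows the same route as the paper, which simply writes $\gamma \cdot \cl_M(S) = \cl_M(\gamma\cdot S) = \cl_M(S)$ for all $\gamma \in \Gamma$; you have just filled in the two justifications (geometric is equivalent to $\Gamma$-invariant, and $\Gamma$ commutes with closure since it permutes flats) that the paper leaves implicit.
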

\begin{proof}
For any $S \subseteq \widetilde{E}$ geometric and $\gamma \in \Gamma$, we have $\gamma \cdot \cl_M(S) = \cl_M(\gamma\cdot S) = \cl_M(S)$.
\end{proof}

\begin{cor}\label{cor:sublattice}
  If $M$ is multisymmetric, then the geometric flats of $M$ form a sublattice of $\L_M$.
\end{cor}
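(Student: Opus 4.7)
The plan is to verify that the collection of geometric flats is closed under both the meet and the join of $\mathscr{L}_M$. Recall that in the lattice of flats of a matroid, the meet of $F_1, F_2$ is the intersection $F_1 \cap F_2$ (which is automatically a flat), and the join is $\cl_M(F_1 \cup F_2)$.

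For the meet, I would observe that for any $\gamma \in \Gamma$ and any subsets $S_1, S_2 \subseteq \widetilde E$, one has $\gamma \cdot (S_1 \cap S_2) = (\gamma \cdot S_1) \cap (\gamma \cdot S_2)$. Hence if $F_1$ and $F_2$ are geometric, so is $F_1 \cap F_2$; since it is already a flat, it is a geometric flat.

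For the join, the analogous identity $\gamma \cdot (F_1 \cup F_2) = (\gamma \cdot F_1) \cup (\gamma \cdot F_2)$ shows that $F_1 \cup F_2$ is geometric whenever $F_1$ and $F_2$ are. Applying \cref{cor:union} to $S = F_1 \cup F_2$ then yields that $\cl_M(F_1 \cup F_2)$ is geometric, and it is a flat by definition of closure.

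There is no real obstacle here: the statement is essentially a bookkeeping consequence of the fact that $\Gamma$ acts on $\widetilde E$ by bijections (so it commutes with unions and intersections) together with the already established \cref{cor:union} that closure preserves the geometric property. The only thing to check beyond these formalities is that intersections and closures of unions of flats compute meets and joins in $\mathscr{L}_M$, which is standard matroid theory.
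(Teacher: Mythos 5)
Your proof is correct and follows essentially the same route as the paper's: the paper likewise notes that the intersection of two geometric flats is geometric (giving the meet) and applies \cref{cor:union} to $F_1\cup F_2$ (giving the join). You have simply spelled out the implicit step that being geometric is equivalent to $\Gamma$-stability and hence is preserved by unions and intersections.
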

\begin{proof}
  If $F$ and $G$ are two geometric flats, then $\cl_M(F \cup G)$ is geometric by \cref{cor:union}.
  The intersection of two geometric flats is also geometric.
  In other words, the set of geometric flats is closed under both join and meet, and therefore forms a sublattice of $\L_M$.
\end{proof}
\begin{cor}\label{cor:polymat}
  If $M$ is multisymmetric on $\widetilde{E} = \widetilde{E}_1 \sqcup \cdots \sqcup \widetilde{E}_n$, then $\L_M^\Gamma$ is the lattice of flats of the polymatroid $P$ on the set of indices $E = \{1, \ldots, n\}$ defined by the rank function
    $\rk_P(A) \coloneqq \rk_M( \cup_{i \in A} \widetilde{E}_i).$ 

\end{cor}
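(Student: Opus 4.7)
The plan is to exhibit an explicit order-preserving bijection between subsets of $E$ and $\Gamma$-invariant (i.e.\ geometric) subsets of $\widetilde{E}$, and then check that under this bijection flats of $P$ correspond exactly to geometric flats of $M$.

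First I would set up the bijection. Since $\Gamma = \sym_{\widetilde{E}_1} \times \cdots \times \sym_{\widetilde{E}_n}$ acts transitively on each part $\widetilde{E}_i$, a subset $S \subseteq \widetilde{E}$ is geometric if and only if $S = \bigcup_{i \in A} \widetilde{E}_i$ for a unique $A \subseteq E$. Writing $\widetilde{A} := \bigcup_{i \in A} \widetilde{E}_i$, the map $A \mapsto \widetilde{A}$ is then an order isomorphism between the Boolean lattice $2^E$ and the sub-poset of geometric subsets of $\widetilde{E}$.

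Next I would verify that $\rk_P$ is the rank function of a loopless polymatroid. Submodularity, monotonicity, and normalization follow immediately from the corresponding properties of $\rk_M$ applied to the sets $\widetilde{A}$, using that $\widetilde{A_1 \cup A_2} = \widetilde{A_1} \cup \widetilde{A_2}$ and $\widetilde{A_1 \cap A_2} = \widetilde{A_1} \cap \widetilde{A_2}$. Looplessness of $P$ follows from looplessness of $M$, since for any nonempty $A \subseteq E$ the set $\widetilde{A}$ is nonempty and thus $\rk_P(A) = \rk_M(\widetilde{A}) \geq 1$.

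The heart of the argument is to show that $A \subseteq E$ is a flat of $P$ if and only if $\widetilde{A}$ is a (necessarily geometric) flat of $M$. For the easy direction, if $\widetilde{A}$ is a flat of $M$, any $A \subsetneq A'$ in $E$ gives $\widetilde{A} \subsetneq \widetilde{A'}$, hence $\rk_M(\widetilde{A}) < \rk_M(\widetilde{A'})$, so $\rk_P(A) < \rk_P(A')$ and $A$ is a flat of $P$. For the converse, suppose $A$ is a flat of $P$ and consider $\cl_M(\widetilde{A})$. Since $\widetilde{A}$ is geometric, \cref{cor:union} implies $\cl_M(\widetilde{A})$ is geometric, so $\cl_M(\widetilde{A}) = \widetilde{A'}$ for a unique $A' \supseteq A$, and $\rk_P(A') = \rk_M(\widetilde{A'}) = \rk_M(\widetilde{A}) = \rk_P(A)$; maximality of $A$ among sets of its rank forces $A' = A$, so $\widetilde{A} = \cl_M(\widetilde{A})$ is a flat of $M$.

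The main obstacle is this last step, and it is exactly where the multisymmetric hypothesis is used (via \cref{cor:union}) — without knowing that closures of geometric sets stay geometric, there is no reason the closure of $\widetilde{A}$ should come from a subset of $E$ at all. Finally, the bijection $A \mapsto \widetilde{A}$ is an order isomorphism of posets, and since both $\L_P$ and $\L_M^\Gamma$ are lattices, it is automatically a lattice isomorphism.
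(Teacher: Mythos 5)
Your proposal is correct and follows essentially the same route as the paper: both arguments identify geometric subsets with unions of parts, use \cref{cor:union} to show the closure of $\widetilde{A}$ is again geometric so that maximality of a flat $A$ of $P$ forces $\cl_M(\widetilde{A}) = \widetilde{A}$, and handle the reverse implication by comparing ranks of nested sets. The only difference is cosmetic: you add an explicit (routine) check that $\rk_P$ is a polymatroid rank function, which the paper leaves implicit.
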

\begin{proof}
  Define $\pi\colon  \widetilde{E} \to E$ by setting $\pi^{-1}(i) = \widetilde{E}_i$.
If $F$ is a flat of $P$, then for all $F \subsetneq A \subseteq \{1, \dotsc, n\}$,
\[ \rk_M(\pi^{-1}(F)) = \rk_P(F) < \rk_P(A) = \rk_M(\pi^{-1}(A)). \]
By \cref{cor:union}, $\cl_M(\pi^{-1}(F))$ is geometric, so we conclude that $\cl_M(\pi^{-1}(F)) = \pi^{-1}(F)$. 
In other words, $\pi^{-1}(F)$ is a flat of $M$.
Conversely, if $F$ is not a flat of $P$, then $\rk_P(F) = \rk_P(F \cup i)$ for some $i$ not in $F$.
This implies $\cl_M(\pi^{-1}(F)) \supseteq \pi^{-1}(F \cup i)$, so $\pi^{-1}(F)$ is not a flat of $M$.
Therefore, $F$ is a flat of $P$ if and only if $\pi^{-1}(F)$ is a geometric flat of $M$, and we have an isomorphism of lattices
\[ \L_P \longrightarrow \L_M^\Gamma, \qquad F \longmapsto \pi^{-1}(F). \qedhere\]
\end{proof}

\begin{lem}\label{lem:closure}
  Let $M$ be a multisymmetric matroid on $\widetilde{E} = \widetilde{E}_1 \sqcup \cdots \sqcup \widetilde{E}_n$. 
\begin{enumerate}\itemsep 5pt
  \item If $S \subseteq \widetilde{E}$, then either $\cl_M(S) \cap \widetilde{E}_i = \widetilde{E}_i$ or $\cl_M(S) \cap \widetilde{E}_i = S \cap \widetilde{E}_i$.\label{lem:extraclosure}
  \item If $F$ is a flat of $M$, then $\rk_M(F) = \rk_M(F^\geo) + |F \setminus F^\geo|$. \label{lem:closurerank}
\end{enumerate}
\end{lem}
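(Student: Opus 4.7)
For part (i), I would exploit the stabilizer action. Fix $i \in \{1,\ldots,n\}$ and let $H \leq \sym_{\widetilde{E}_i}$ be the subgroup fixing $S \cap \widetilde{E}_i$ pointwise, viewed as a subgroup of $\Gamma$. Every $\sigma \in H$ fixes $S$ pointwise, so
\[ \sigma \cdot \cl_M(S) = \cl_M(\sigma \cdot S) = \cl_M(S). \]
Since $H$ acts as the full symmetric group on $\widetilde{E}_i \setminus (S \cap \widetilde{E}_i)$ and therefore transitively, the $H$-stable subset $\cl_M(S) \cap (\widetilde{E}_i \setminus (S \cap \widetilde{E}_i))$ is either empty or all of $\widetilde{E}_i \setminus (S \cap \widetilde{E}_i)$. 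Combined with the trivial inclusion $S \cap \widetilde{E}_i \subseteq \cl_M(S) \cap \widetilde{E}_i$, this yields the two possibilities in (i).

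For part (ii), the first step is a block-by-block calculation showing that for \emph{any} $F \subseteq \widetilde{E}$,
\[ F^\geo = \bigsqcup_{i\,:\,\widetilde{E}_i \subseteq F} \widetilde{E}_i. \]
Indeed, if $F \cap \widetilde{E}_i$ is a proper nonempty subset of $\widetilde{E}_i$, intersecting its $\sym_{\widetilde{E}_i}$-orbit kills every element of $\widetilde{E}_i$. With this formula in hand, I would induct on $k := |F \setminus F^\geo|$, the case $k = 0$ being trivial. For the inductive step, choose $e \in F \setminus F^\geo$ and let $i$ be the block with $e \in \widetilde{E}_i$, so that $\widetilde{E}_i \not\subseteq F$.

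The main claim is that $F' := F \setminus \{e\}$ is again a flat. This follows from applying part (i) to $F'$: for each $j$, we have $\cl_M(F') \cap \widetilde{E}_j \in \{\widetilde{E}_j,\, F' \cap \widetilde{E}_j\}$, but the inclusion $\cl_M(F') \subseteq \cl_M(F) = F$ forces the first alternative to occur only when $\widetilde{E}_j \subseteq F$, and in that case the two alternatives agree (since $e \notin \widetilde{E}_j$ for $j \neq i$, while the case $j = i$ is excluded by $\widetilde{E}_i \not\subseteq F$). Thus $\cl_M(F') = F'$. Since $F'$ is a flat strictly contained in $F$ and $|F \setminus F'| = 1$, we get $\rk_M(F') = \rk_M(F) - 1$. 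The explicit formula for the geometric part shows $(F')^\geo = F^\geo$, hence $|F' \setminus (F')^\geo| = k - 1$, and the inductive hypothesis closes the argument via $\rk_M(F) = 1 + \rk_M(F^\geo) + (k-1)$.

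The main obstacle is the case analysis showing $F'$ is a flat; using (i) makes it clean, but one has to carefully track how $F$, $F'$, and $F^\geo$ intersect each block to verify that in every block the two options delivered by (i) collapse to the desired equality $\cl_M(F') \cap \widetilde{E}_j = F' \cap \widetilde{E}_j$.
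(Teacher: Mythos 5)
Your proof is correct and follows essentially the same route as the paper: part (i) via the pointwise stabilizer of $S$ inside $\Gamma$ acting transitively on $\widetilde{E}_i \setminus S$, and part (ii) by a one-element-at-a-time rank count along the interval between $F^\geo$ and $F$, with part (i) supplying the key step. The only real difference is direction: the paper adjoins the elements of $F \setminus F^\geo$ to $F^\geo$ one at a time and derives a contradiction if some addition fails to raise the rank, whereas you delete them from $F$ one at a time, first checking that each deletion yields a flat.
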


\begin{proof}
    A permutation of $\widetilde{E}_i \setminus S$ induces an automorphism of $M$ that fixes $S$. Any such automorphism also fixes $\cl_M(S)$ because automorphisms commute with closure.
    Hence, if $(\widetilde{E}_i \setminus S) \cap \cl_M(S)$ is nonempty, then $\widetilde{E}_i \subseteq \cl_M(S)$. This proves the first part.
     
      For the second part, let $\{s_1, \ldots, s_k\} = F \setminus F^\geo$. If \cref{lem:closurerank} fails then for some $1 \leq i \leq k,$
       \[
         \rk_M(F^\geo\cup \{s_1, \ldots, s_i\}) = \rk_M(F^\geo\cup \{s_1, \ldots, s_{i+1}\}).
       \]
       Therefore if $s_{i+1} \in \widetilde{E}_j$ then $\widetilde{E}_j \subseteq \cl_M(F^\geo\cup \{s_1, \ldots, s_i\}) \subseteq F$ by the first part.
        Consequently, $s_{i+1} \in F^\geo$, a contradiction. 
\end{proof}

\begin{lem}\label{uniqueness}
    A multisymmetric matroid is determined by its geometric sets and their ranks.
\end{lem}
\begin{proof}
    Let $M$ and $M'$ be two multisymmetric matroids on $\widetilde E = \widetilde E_1 \sqcup \cdots \sqcup \widetilde E_n$, and suppose that for all $A \subset \{1, \ldots, n\}$, $\rk_M(\cup_{i \in A} \widetilde E_i) = \rk_{M'}(\cup_{i \in A} \widetilde E_i)$.
   If $F$ is a flat of $M$, and  $F'$ is the closure of $F$ in $M'$, then
   \[
     \rk_{M'}(F) \leq \rk_{M'}(F^\geo) + \rk_{M'}(F \setminus F^{\geo})
     \leq \rk_{M}(F^\geo) + |F \setminus F^{\geo}| = \rk_M(F)
   \]
   by \cref{lem:closure}\cref{lem:closurerank}.
   Symmetrically, $\rk_M(F') \leq \rk_{M'}(F')$, so
   \[
     \rk_{M'}(F) \leq \rk_M(F) \leq \rk_{M}(F') \leq \rk_{M'}(F').
   \]
   The left- and rightmost terms are equal, so $F = F'$ because $F$ is a flat of $M$. This shows that $M$ and $M'$ have the same flats, and that their flats have the same ranks, so $M$ and $M'$ are equal.    
 \end{proof}
 \begin{rmk}
   The closure of a geometric set is a geometric flat by \cref{lem:closure}\cref{lem:extraclosure}, so \cref{uniqueness} implies that a multisymmetric matroid is also determined by its geometric flats and their ranks.
 \end{rmk}

\subsection{Lifts}
  Let $P$ be a polymatroid on $E = \{1, \ldots, n\}$ and
   $M$ a multisymmetric matroid on $\widetilde{E}~=~\widetilde{E}_1~\sqcup~\cdots~\sqcup~\widetilde{E_n}$. If $P$ is the polymatroid given by Corollary \ref{cor:polymat}, then we say that $M$ is a \word{multisymmetric lift} of $P$.
  If $\rk_M(\widetilde{E}_i) = |\widetilde{E_i}|$ for all $1 \leq i \leq n$, then we say that $M$ is a \word{minimal} multisymmetric lift of $P$.
\begin{thm}\label{lift}
A polymatroid $P$ has a unique minimal multisymmetric lift $\widetilde P$ constructed as follows.
  For $1 \leq i \leq n$, let $\widetilde E_i = \{1, \ldots, \rk_P(i)\}$, and 
$ \widetilde E = \widetilde E_1 \sqcup \cdots \sqcup \widetilde E_n$. Define the projection $\pi\colon  \widetilde E \to E$ by $\pi^{-1}(i) = \widetilde E_i$.

  The minimal multisymmetric lift of $P$ is the matroid $\widetilde P$ on $\widetilde E$ with rank function
  \[
    \rk_{\widetilde P}(S) = \min \{ \rk_P(A) + |S \setminus \pi^{-1}(A)| : A \subseteq E \}.
\]
\end{thm}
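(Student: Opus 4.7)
The plan is to (1) check that the proposed formula defines a matroid rank function, (2) verify that the resulting matroid $\widetilde{P}$ carries the multisymmetric structure given by the partition and that it is a minimal multisymmetric lift of $P$, and (3) establish uniqueness by showing that any minimal multisymmetric lift of $P$ has rank function equal to this minimum.

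For step (1), normalization and the bound $\rk_{\widetilde{P}}(S) \leq |S|$ are visible by choosing $A = \emptyset$ in the minimum, and monotonicity is immediate from the monotonicity of $S \mapsto |S \setminus \pi^{-1}(A)|$. Submodularity is the main work. Given $S, T \subseteq \widetilde{E}$, let $A_S$ and $A_T$ be minimizers. Combining the submodularity of $\rk_P$ applied to $A_S$ and $A_T$ with the set-theoretic identity
\[
|S \setminus \pi^{-1}(A_S)| + |T \setminus \pi^{-1}(A_T)| \geq |(S \cup T) \setminus \pi^{-1}(A_S \cup A_T)| + |(S \cap T) \setminus \pi^{-1}(A_S \cap A_T)|,
\]
which holds because $\pi^{-1}$ commutes with unions and intersections, yields $\rk_{\widetilde{P}}(S) + \rk_{\widetilde{P}}(T) \geq \rk_{\widetilde{P}}(S \cup T) + \rk_{\widetilde{P}}(S \cap T)$.

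For step (2), the group $\Gamma = \prod_i \sym_{\widetilde{E}_i}$ preserves each $\pi^{-1}(A)$ setwise, so the formula shows $\rk_{\widetilde{P}}$ is $\Gamma$-invariant; consequently $\Gamma$ acts by matroid automorphisms on $\widetilde{P}$ and so takes flats to flats. To check that $\widetilde{P}$ lifts $P$ in the sense of \cref{cor:polymat}, I would verify $\rk_{\widetilde{P}}(\pi^{-1}(A)) = \rk_P(A)$: the choice $B = A$ attains $\rk_P(A)$, and for arbitrary $B$ iterated subadditivity of $\rk_P$ gives
\[
\rk_P(A) \leq \rk_P(A \cap B) + \sum_{i \in A \setminus B} \rk_P(i) \leq \rk_P(B) + |\pi^{-1}(A) \setminus \pi^{-1}(B)|,
\]
so $B = A$ is a minimizer. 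Specializing to $A = \{i\}$ proves $\widetilde{P}$ is minimal, since $\rk_{\widetilde{P}}(\widetilde{E}_i) = \rk_P(i) = |\widetilde{E}_i|$.

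Uniqueness in step (3) is where one might expect the main obstacle, but \cref{lem:closure}(ii) does most of the work. Let $M$ be any minimal multisymmetric lift of $P$. The bound $\rk_M(S) \leq \rk_{\widetilde{P}}(S)$ is immediate: for any $A$, subadditivity of $\rk_M$ together with the independence of $\widetilde{E}_i$ in $M$ (which follows from minimality) gives $\rk_M(S) \leq \rk_M(\pi^{-1}(A)) + |S \setminus \pi^{-1}(A)| = \rk_P(A) + |S \setminus \pi^{-1}(A)|$, where the equality is the definition of $M$ being a lift. For the reverse inequality, set $F = \cl_M(S)$. By \cref{cor:union}, $F^\geo$ is a geometric flat of $M$, so by \cref{cor:polymat} it equals $\pi^{-1}(A)$ for some flat $A$ of $P$. \Cref{lem:closure}(ii) then yields
\[
\rk_M(S) = \rk_M(F) = \rk_M(F^\geo) + |F \setminus F^\geo| = \rk_P(A) + |F \setminus \pi^{-1}(A)| \geq \rk_P(A) + |S \setminus \pi^{-1}(A)| \geq \rk_{\widetilde{P}}(S),
\]
using $S \subseteq F$. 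Thus $\rk_M = \rk_{\widetilde{P}}$. The only real calculation in the argument is the submodularity verification of step (1); the lift, minimality, and uniqueness assertions all follow by careful application of \cref{cor:polymat} and \cref{lem:closure}.
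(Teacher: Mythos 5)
Your proof is correct and follows essentially the same route as the paper: the submodularity argument via minimizers and the set-theoretic inequality, the verification that $\rk_{\widetilde P}(\pi^{-1}(A))=\rk_P(A)$, and the use of \cref{lem:closure}\cref{lem:closurerank} for uniqueness correspond exactly to the paper's \cref{liftwelldefined}, \cref{Gfixed}, and concluding argument. Your uniqueness step simply unpacks the paper's terse claim that a multisymmetric matroid is determined by the ranks of its $\Gamma$-stable sets; the only cosmetic quibble is that the fact that $F^\geo$ is a geometric flat follows from its being an intersection of the flats $\gamma\cdot F$ (together with $\Gamma$ preserving flats) rather than from \cref{cor:union} as cited.
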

\begin{example}\label{rmk:why-geometric}
  Suppose $P$ is the polymatroid realized by an arrangement of subspaces $V_1,\ldots, V_n$ in $V$ as in \cref{subspace}.
  The minimal multisymmetric lift $\widetilde P$ is realized by any hyperplane arrangement
  \[ \{V_{i,j} : 1 \leq i \leq n, \; 1 \leq j \leq \rk_P(i)\}, \]
  where $V_{i,1}, \ldots, V_{i, \rk_P(i)}$ are generic hyperplanes containing $V_i$.
  Under the correspondence between flats of $\widetilde P$ and intersections of the $V_{i,j}$'s, the geometric flats correspond to the subspaces of $V$ arise in for any choice of $\{V_{i,j}\}_{i,j}$.
\end{example}
  Versions of the construction in \cref{lift} make many independent appearances in the literature, for example, in \cite[\S2]{H72} and \cite[Propositions 3.1 and 3.2]{L77}. The most complete treatment we are aware of is \cite[\S2]{N86}, whose terminology differs from ours.
\subsubsection*{Notation.} We continue to use the notations of \cref{lift} in the remainder of this section. For visual clarity, we often write $M = \widetilde P$.
As usual, $\Gamma$ stands for the product of symmetric groups that acts on $M$.

\medskip
\begin{prop}\label{liftwelldefined}
The minimal lift $M = \widetilde P$ is a multisymmetric matroid. Explicitly,
\begin{enumerate}\itemsep 5pt
\item $\rk_{M}$ is a matroid rank function, and
\item the action of $\Gamma$  preserves flats.
\end{enumerate}
\end{prop}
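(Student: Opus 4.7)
The plan is to verify each of the four matroid rank axioms for $\rk_M$ using the minimum formula, and then show that $\Gamma$-invariance of $\rk_M$ follows essentially for free from the formula's structure.

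First I would dispatch the easy axioms. Normalization ($\rk_M(\emptyset)=0$) comes from taking $A=\emptyset$ together with looplessness of $P$. Monotonicity of $\rk_M$ follows because, for fixed $A$, the function $S \mapsto |S \setminus \pi^{-1}(A)|$ is monotone. Boundedness $\rk_M(S) \le |S|$ follows from taking $A = \emptyset$, and using $\rk_P(\emptyset)=0$.

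The main step is submodularity, and this is where I expect the bulk of the work (though it remains essentially bookkeeping). Fix $S_1, S_2 \subseteq \widetilde E$ and choose $A_1,A_2 \subseteq E$ that realize the minima defining $\rk_M(S_1)$ and $\rk_M(S_2)$, and set $B_i \coloneqq \pi^{-1}(A_i)$. The plan is to combine submodularity of $\rk_P$ applied to $(A_1,A_2)$ with the set-theoretic inequality
\[
|S_1 \setminus B_1| + |S_2 \setminus B_2| \;\geq\; |(S_1 \cup S_2) \setminus (B_1 \cup B_2)| + |(S_1 \cap S_2) \setminus (B_1 \cap B_2)|,
\]
which after rewriting $|S \setminus B| = |S| - |S \cap B|$ reduces to the identity $|X \cup Y| + |X \cap Y| = |X| + |Y|$ applied to $X = S_1 \cap B_1$ and $Y = S_2 \cap B_2$, modulo an obvious containment. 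Since $\pi^{-1}(A_1 \cup A_2) = B_1 \cup B_2$ and $\pi^{-1}(A_1 \cap A_2) = B_1 \cap B_2$, the defining formula of $\rk_M$ applied to $S_1 \cup S_2$ and $S_1 \cap S_2$ (using the test sets $A_1 \cup A_2$ and $A_1 \cap A_2$) gives
\[
\rk_M(S_1 \cup S_2) + \rk_M(S_1 \cap S_2) \;\leq\; \rk_M(S_1) + \rk_M(S_2),
\]
as desired.

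For part (ii), I would observe that for any $\gamma \in \Gamma$ and any $A \subseteq E$, the preimage $\pi^{-1}(A)$ is $\Gamma$-invariant since $\Gamma$ permutes each fiber $\widetilde E_i$ internally. Hence $|(\gamma \cdot S) \setminus \pi^{-1}(A)| = |S \setminus \pi^{-1}(A)|$ for every $A$, so $\rk_M(\gamma \cdot S) = \rk_M(S)$. This shows $\Gamma$ acts by matroid automorphisms of $M$, and automorphisms automatically send flats to flats (as closure is defined from rank). Thus $M$ satisfies \cref{dfn:multisymmetric}, completing the proof. The only real obstacle is carrying out the submodularity calculation cleanly; everything else is essentially formal from the minimum formula.
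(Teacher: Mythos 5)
Your proof is correct and takes essentially the same route as the paper's: for submodularity you pick $A_1,A_2$ realizing the two minima, combine submodularity of $\rk_P$ with the set-theoretic inequality on complements, and for part (ii) you use $\Gamma$-invariance of $|S\setminus\pi^{-1}(A)|$. The only cosmetic differences are that you obtain $\rk_M(S)\le |S|$ directly from the test set $A=\emptyset$ where the paper uses an inductive unit-increase argument, and that normalization needs only $\rk_P(\emptyset)=0$ (not looplessness).
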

\begin{proof}
 The proof of the first part is reproduced from \cite[Proof of Theorem 11.1.9]{O11}. We need to check that $\rk_M$ is non-negative, increasing, submodular, and satisfies $\rk_M(S)  \leq |S|$ for $S \subseteq \widetilde E$. 

It is clear that values of $\rk_M$ are non-negative.
Let $S \subseteq \widetilde E$ and $s \in \widetilde E \setminus S$. For all $A \subseteq E$,
\[
\rk_P(A) + |S \setminus \pi^{-1}( A)| \leq \rk_P(A) + |S\cup a \setminus \pi^{-1}( A)|,
\]
so $\rk_M$ is increasing. Moreover, by induction on $|S|$,
\begin{align*}
  \rk_M(S \cup s)
  =& \min\{\rk_P(A) + |S \cup s \setminus \pi^{-1}(A)| : A \subseteq E\}\\
  \leq& \min\{\rk_P(A) + |S \setminus \pi^{-1}(A)| : A \subseteq E\} + 1 = \rk_M(S) + 1\leq |S|+1 = |S \cup s|,
\end{align*}
so it only remains to check that $\rk_M$ is submodular.
Let $S_1, S_2 \subseteq \widetilde E$ and 
  $A_1, A_2 \subseteq E$ such that $\rk_M(S_i) = \rk_{P}(A_i) + 
  |S_i \setminus \pi^{-1}(A_i)|$. Then
\begin{align*}
\rk_M&(S_1) + \rk_M(S_2)
= \rk_{P}(A_1) + |S_1 \setminus \pi^{-1}(A_1)| + \rk_{P}(A_2) + |S_2 \setminus \pi^{-1}(A_2)|  \\
\geq& \rk_{P}(A_1 \cup A_2) + \rk_{P}(A_1 \cap A_2) + |S_1 \setminus \pi^{-1}(A_1)| + |S_2 \setminus \pi^{-1}(A_2)| \quad \text{by submodularity of $\rk_{P}$} \\
\geq& \rk_{P}(A_1 \cup A_2) + \rk_{P}(A_1 \cap A_2) + |(S_1 \cup S_2) \setminus \pi^{-1}(A_1 \cup A_2)| + |(S_1 \cap S_2) \setminus \pi^{-1}(A_1 \cap A_2)| \\
\geq& \rk_M(S_1 \cup S_2)  + \rk_M(S_1 \cap S_2).
\end{align*}
For the second part, let $\gamma \in \Gamma$ and $S \subseteq \widetilde E$.
  For any $A \subseteq E$, we have
   $|S \setminus \pi^{-1}(A)| = |(\gamma \cdot S) \setminus \pi^{-1}(A)|$, and hence 
   $\rk_M(S) = \rk_M(\gamma \cdot S)$. 
\end{proof}
The following lemma implies $\widetilde P$ is a multisymmetric lift of $P$.
\begin{lem}\label{Gfixed}
  Fix notation as in \cref{lift}, and set $M \coloneqq \widetilde{P}$. If $S \subseteq \widetilde E$ is stable under $\Gamma$, then
    $\rk_M(S) = \rk_P(\pi(S)).$
\end{lem}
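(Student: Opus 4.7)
The plan is to use the explicit formula for $\rk_M$ given in \cref{lift} and exploit the fact that $\Gamma$-stable subsets of $\widetilde E$ are exactly unions of fibers of $\pi$. Write $B = \pi(S) \subseteq E$; since $S$ is $\Gamma$-stable, for each $i \in E$ either $\widetilde E_i \subseteq S$ or $\widetilde E_i \cap S = \emptyset$, so $S = \pi^{-1}(B)$.

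For the upper bound $\rk_M(S) \leq \rk_P(\pi(S))$, I would simply take $A = B$ in the definition of $\rk_M$, yielding
\[
\rk_M(S) \leq \rk_P(B) + |S \setminus \pi^{-1}(B)| = \rk_P(B) + 0 = \rk_P(\pi(S)).
\]

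For the lower bound, I want to show that for every $A \subseteq E$,
\[
\rk_P(A) + |S \setminus \pi^{-1}(A)| \geq \rk_P(B).
\]
Since $S = \pi^{-1}(B)$, we have $|S \setminus \pi^{-1}(A)| = |\pi^{-1}(B \setminus A)| = \sum_{i \in B \setminus A} |\widetilde E_i| = \sum_{i \in B \setminus A} \rk_P(i)$. So it suffices to establish
\[
\rk_P(A) + \sum_{i \in B \setminus A} \rk_P(i) \geq \rk_P(B).
\]
This follows from submodularity of $\rk_P$ applied iteratively: adding the elements of $B \setminus A$ to $A$ one at a time and using $\rk_P(A' \cup \{i\}) - \rk_P(A') \leq \rk_P(\{i\}) - \rk_P(\emptyset) = \rk_P(i)$ at each step gives $\rk_P(A \cup B) - \rk_P(A) \leq \sum_{i \in B \setminus A} \rk_P(i)$, and monotonicity yields $\rk_P(A \cup B) \geq \rk_P(B)$.

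There is no real obstacle here; the substantive content is already packaged into \cref{lift}, and this lemma is just the observation that the minimum defining $\rk_M(S)$ is attained at $A = \pi(S)$ when $S$ is $\Gamma$-stable. The only point requiring a touch of care is the telescoping submodularity argument, which is standard.
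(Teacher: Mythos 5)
Your proof is correct and follows essentially the same route as the paper's: the upper bound via $A=\pi(S)$, and the lower bound by computing $|S\setminus\pi^{-1}(A)|=\sum_{i\in\pi(S)\setminus A}\rk_P(i)$ and invoking submodularity (the paper first restricts the minimum to $A\subseteq\pi(S)$ using monotonicity, while you keep $A$ arbitrary and apply monotonicity at the end, a negligible difference). Your telescoping submodularity step just makes explicit what the paper leaves as ``by submodularity of $\rk_P$.''
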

\begin{proof}
  Since $S$ is stable under the action of $\Gamma$, $S$ is a union of 
  fibers of $\pi$. Hence,
    \begin{align*}
      \rk_{M}(S) =& \min \{ \rk_P(A) + |S \setminus \pi^{-1}(A)| : A \subseteq E \}\quad \text{by the definition of $\rk_{M}$}\\
      =& \min \{ \rk_P(A) + |S \setminus \pi^{-1}(A)| : A \subseteq \pi(S) \} \quad \text{because $\rk_P$ is increasing}\\
      =& \min \bigg\{ \rk_P(A) + \sum_{i\in \pi(S)\setminus 
      A}|\pi^{-1}(i)| : A \subseteq \pi(S) \bigg\}\quad \text{because 
      $S$ is a union of fibers of $\pi$}\\
      = & \min \bigg\{ \rk_P(A) + \sum_{i\in \pi(S)\setminus A}\rk_P(i) 
      : A \subseteq \pi(S) \bigg\} \quad\text{by the definition of $\pi$ 
      in \cref{lift}}\\
      \geq& \rk_P(\pi(S))\quad \text{by submodularity of $\rk_P$.}
    \end{align*}
    On the other hand, taking $A=\pi(S)$ in the definition of $\rk_M(S)$, we have $\rk_{M}(S)\leq \rk_P(\pi(S))$.
\end{proof}
\begin{proof}[Proof of \cref{lift}]
  Let $P$ be a polymatroid, and let $M = \widetilde P$ on $\widetilde E = \widetilde E_1 \sqcup \cdots \sqcup \widetilde E_n$, acted upon by $\Gamma$, be as in the statement of \cref{lift}.
  By \cref{liftwelldefined} and \cref{Gfixed}, $M$ is a multisymmetric lift of $P$. It is minimal because
  \[
    \rk_M(\widetilde E_i) = \rk_P(\pi(\widetilde E_i)) = \rk_P(i) = |\widetilde E_i|.
   \]
   The uniqueness statement follows from \cref{uniqueness} and the fact that the ranks of geometric sets in any lift of $P$ are determined by $P$.
\end{proof}

\subsection{Operations}
The formation of minimal multisymmetric lifts commutes with some polymatroid operations.
Let $F$ be a flat of a polymatroid $P$ on $E$. The \word{restriction} of $P$ to $F$, denoted $P|_F$, is the polymatroid on $F$ with rank function
\[
  \rk_{P|_F}(A) \coloneqq \rk_P(A), \quad A \subseteq F.
\]
There is a lattice isomorphism
\[ \L_{P|_F} \to \{G \in \L_P: G \leq F\},\quad H \mapsto H. \]

If $P_1$ and $P_2$ are polymatroids on $E^1$ and $E^2$, respectively, then their \textit{direct sum} is the polymatroid $P_1 \oplus P_2$ on $E^1 \sqcup E^2$ with rank function
\[
  \rk_{P_1 \oplus P_2}(S) = \rk_{P_1}(S \cap E^1) + \rk_{P_2}(S \cap E^2).
\]
There is a lattice isomorphism
\[
  \L_{P_1} \times \L_{P_2} \to \L_{P_1 \oplus P_2}, \quad (F, G) \mapsto F \sqcup G.
  \]

\begin{lem}\label{lem:sum}
  If $P_1$ and $P_2$ are polymatroids, then
    $\widetilde{P_1 \oplus P_2} = \widetilde P_1 \oplus \widetilde P_2.$
\end{lem}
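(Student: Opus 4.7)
The plan is to apply the uniqueness assertion of \cref{lift}: to show $\widetilde{P_1 \oplus P_2} = \widetilde P_1 \oplus \widetilde P_2$, it suffices to exhibit a multisymmetric structure on $\widetilde P_1 \oplus \widetilde P_2$ making it into a \emph{minimal} multisymmetric lift of $P_1 \oplus P_2$. Then the two matroids agree by the uniqueness of minimal lifts.

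First I would identify ground sets. If $P_1$ and $P_2$ live on $E^1$ and $E^2$ respectively, then the construction in \cref{lift} gives $\widetilde P_i$ the ground set $\widetilde E^i = \bigsqcup_{j \in E^i}\{1,\ldots,\rk_{P_i}(j)\}$. Since $\rk_{P_1 \oplus P_2}(j) = \rk_{P_i}(j)$ for $j \in E^i$, the ground set of $\widetilde{P_1 \oplus P_2}$ is canonically $\widetilde E^1 \sqcup \widetilde E^2$, which is also the ground set of $\widetilde P_1 \oplus \widetilde P_2$. The partition of $\widetilde E^1 \sqcup \widetilde E^2$ into fibers over $E^1 \sqcup E^2$ is the disjoint union of the two partitions, and so the group acting on $\widetilde P_1 \oplus \widetilde P_2$ is $\Gamma = \Gamma_1 \times \Gamma_2$.

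Next I would verify the multisymmetric structure and check that $\widetilde P_1 \oplus \widetilde P_2$ is a lift of $P_1 \oplus P_2$. Flats of a direct sum of matroids are exactly disjoint unions $F_1 \sqcup F_2$ of flats of the summands, and by \cref{liftwelldefined} each $\Gamma_i$ permutes flats of $\widetilde P_i$. Hence $\Gamma$ permutes flats of $\widetilde P_1 \oplus \widetilde P_2$, so it is multisymmetric. Using \cref{Gfixed} applied to each $\widetilde P_i$, for a $\Gamma$-stable set $S = S_1 \sqcup S_2$ we get
\[
 \rk_{\widetilde P_1 \oplus \widetilde P_2}(S) = \rk_{\widetilde P_1}(S_1) + \rk_{\widetilde P_2}(S_2) = \rk_{P_1}(\pi(S_1)) + \rk_{P_2}(\pi(S_2)) = \rk_{P_1 \oplus P_2}(\pi(S)),
\]
which, via \cref{cor:polymat}, identifies the associated polymatroid with $P_1 \oplus P_2$. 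Minimality is immediate: for each $j \in E^1 \sqcup E^2$, the corresponding fiber $\widetilde E_j$ lies entirely in one summand, where its rank equals its cardinality by minimality of $\widetilde P_i$.

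There is no real obstacle here; the argument is essentially bookkeeping, and the only point that requires any care is making sure that the natural identification of ground sets and partitions is compatible with the $\Gamma$-action, so that the uniqueness clause of \cref{lift} applies verbatim. Alternatively, one could bypass uniqueness and compute both rank functions directly from \cref{lift}: splitting the minimum $\min_{A \subseteq E^1 \sqcup E^2}$ as a double minimum over $A = A_1 \sqcup A_2$ and using additivity of $\rk_{P_1 \oplus P_2}$ and of $|\cdot \setminus \pi^{-1}(\cdot)|$ produces the rank function of $\widetilde P_1 \oplus \widetilde P_2$ on the nose, but the uniqueness route is cleaner.
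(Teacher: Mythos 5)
Your proposal is correct and follows the same route as the paper: exhibit $\widetilde P_1 \oplus \widetilde P_2$ as a minimal multisymmetric lift of $P_1 \oplus P_2$ (via the product group $\Gamma_1 \times \Gamma_2$ and additivity of ranks) and invoke the uniqueness clause of \cref{lift}. Your version simply spells out the ground-set identification and the \cref{Gfixed} computation that the paper leaves implicit.
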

\begin{proof}
  From Corollary \ref{cor:polymat} and the definition of $P_1 \oplus P_2$, it follows that $\widetilde P_1 \oplus \widetilde P_2$ lifts $P_1 \oplus P_2$.
  If $\Gamma_1$ and $\Gamma_2$ are the groups acting on $\widetilde P_1$ and $\widetilde P_2$, then $\Gamma_1 \times \Gamma_2$ acts on $\widetilde P_1 \oplus \widetilde P_2$, so it is multisymmetric.
  It is minimal by minimality of $\widetilde P_1$ and $\widetilde P_2$, so the lemma holds by the uniqueness statement of \cref{lift}.
\end{proof}

\begin{lem}\label{lem:restriction}
  If $F$ is a flat of a polymatroid $P$, then
  $\widetilde{P|_F} = \widetilde P|_{\pi^{-1}(F)}.$
\end{lem}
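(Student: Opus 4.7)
The plan is to invoke the uniqueness statement in \cref{lift}: since $\widetilde{P|_F}$ is characterized as the unique minimal multisymmetric lift of $P|_F$, it suffices to verify that $\widetilde{P}|_{\pi^{-1}(F)}$ is such a lift. Note that both matroids have ground set $\pi^{-1}(F) = \bigsqcup_{i \in F} \widetilde{E}_i$ with the same partition, because $\rk_{P|_F}(i) = \rk_P(i)$ for every $i \in F$.

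First I would check that $\widetilde{P}|_{\pi^{-1}(F)}$ is a multisymmetric matroid with respect to the group $\Gamma_F := \prod_{i \in F} \sym_{\widetilde{E}_i}$. Viewing $\Gamma_F$ as the subgroup of $\Gamma$ acting by the identity on $\widetilde{E} \setminus \pi^{-1}(F)$, its action commutes with $\cl_{\widetilde{P}}$ (since $\Gamma$ preserves flats of $\widetilde{P}$ by \cref{liftwelldefined}) and fixes $\pi^{-1}(F)$ setwise. The flats of the restriction $\widetilde{P}|_{\pi^{-1}(F)}$ are precisely the subsets of the form $\cl_{\widetilde{P}}(A) \cap \pi^{-1}(F)$, and these are permuted among themselves by $\Gamma_F$.

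Next I would identify the polymatroid lifted by $\widetilde{P}|_{\pi^{-1}(F)}$. By \cref{cor:polymat}, its rank function sends $A \subseteq F$ to $\rk_{\widetilde{P}|_{\pi^{-1}(F)}}(\pi^{-1}(A)) = \rk_{\widetilde{P}}(\pi^{-1}(A))$, which equals $\rk_P(A) = \rk_{P|_F}(A)$ by \cref{Gfixed}. Hence $\widetilde{P}|_{\pi^{-1}(F)}$ is a multisymmetric lift of $P|_F$, and it is minimal because $\rk_{\widetilde{P}|_{\pi^{-1}(F)}}(\widetilde{E}_i) = \rk_{\widetilde{P}}(\widetilde{E}_i) = |\widetilde{E}_i|$ for each $i \in F$, by minimality of $\widetilde{P}$. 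The uniqueness statement of \cref{lift} then gives the claimed equality. No step is a serious obstacle; the argument is a direct unwinding of definitions, provided one has set up uniqueness and \cref{Gfixed} in advance.
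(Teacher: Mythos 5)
Your proof is correct and follows the same route as the paper: verify that $\widetilde{P}|_{\pi^{-1}(F)}$ is a minimal multisymmetric lift of $P|_F$ (multisymmetry via the subgroup $\prod_{i\in F}\sym_{\widetilde{E}_i}$, identification of the lifted polymatroid via \cref{cor:polymat} and \cref{Gfixed}, minimality inherited from $\widetilde{P}$) and then invoke the uniqueness statement of \cref{lift}. The paper's proof is just a terser version of exactly this argument.
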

\begin{proof}
  The rank functions of both sides are obtained by restriction, so $\widetilde P|_{\pi^{-1}(F)}$ lifts $P|_F$.
  If $\Gamma$ acts on $\widetilde P$, then a subset of $\Gamma$'s factors acts on $\widetilde{P}|_{\pi^{-1}(F)}$, so it is multisymmetric.
  It is minimal because $\widetilde P$ is, so \cref{lift} implies the lemma.
\end{proof}

\section{Building sets and Bergman fans}\label{sec:buildingsets}
Here, we recall the combinatorics of geometric building sets. We then generalize the definition of the Bergman fan of a polymatroid given in the introduction by associating a fan to each polymatroid equipped with a geometric building set.
\subsection{Geometric building sets}
Let $P$ be a polymatroid.
If $\mathscr{G} \subseteq \L_P$ and $F$ is a flat, let
\[ \mathscr{G}_{\leq F} \coloneqq \{ G \in \mathscr{G} : G \leq F\} \]
and write $\max \mathscr{G}$ for the set of maximal elements of $\mathscr{G}$.
\begin{dfn}
  A \word{geometric building set}\footnote{Our terminology is that of \cite[Definition 4.4]{FK04}. In \cite{PP21}, geometric building sets are called ``combinatorial building sets'', but \cite{FK04} uses this term for collections that satisfy only the isomorphism condition. 
  The ``combinatorial building sets'' of \cite{FK04} are the same as the ``building sets'' of \cite{FY04}. Postnikov considers only building sets for the Boolean lattice in \cite[Definition 7.1]{P09}; all such building sets are geometric.} of a polymatroid $P$ is a collection $\G$ of nonempty flats such that for all $F \in \L_P \setminus \{\emptyset\}$, the map
  \begin{equation*}
    \prod_{G \in \max \G_{\leq F}} \L_{P|_G} \to \L_{P|_F} \label{factorization}
    \end{equation*}
  is an isomorphism, and
  \begin{equation*}
 \sum_{G \in \max \G_{\leq F}} \rk(G) = \rk(F). \label{combobuildingset}
\end{equation*}
  If $E \in \G$, then a \word{nested set} of $P$ with respect to $\G$ is a subset $\N \subseteq  \G$ such that for all $\{F_1, \ldots, F_k\} \subseteq \N$ pairwise incomparable with $k \ge 2$, we have that 
  \[ \cl_P(F_1 \cup \cdots \cup F_k)  \not \in \G.\]
\end{dfn}
With respect to a fixed geometric building set, a subset of a nested set is nested, so nested sets form a simplicial complex.
All building sets are henceforth assumed to contain $E$\footnote{In the realizable case, this assumption guarantees the associated wonderful compactification is smooth and can be described as an iterated blow-up \cite[\S4.1]{dCP95}.  Combinatorially, we lose nothing by this assumption \cite[Remark 4.1]{PP21}.}.
\begin{example}
  The maximal geometric building set of $P$ is the collection of all nonempty flats.
  With respect to this building set, the nested sets are flags of nonempty flats.
\end{example}

\begin{lem}\label{liftedbs}
  Let $P$ be a polymatroid, $\widetilde P$ its minimal multisymmetric lift, and $\pi$ as in \cref{lift}.
  If $\mathscr G$ is a geometric building set for $P$, then 
\[ \widetilde{\mathscr G} = \{ \pi^{-1}(G) : G \in \mathscr G\} \cup \{\text{atoms of $\mathscr L_{\widetilde P}$}\} \]
is a geometric building set for $\widetilde P$.
\end{lem}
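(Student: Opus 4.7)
Fix a nonempty flat $F$ of $\widetilde P$. By \cref{cor:polymat}, the geometric part of $F$ has the form $F^\geo = \pi^{-1}(F_P)$ for a unique flat $F_P$ of $P$, and by \cref{lem:closure}\cref{lem:closurerank}, $\rk_{\widetilde P}(F) = \rk_{\widetilde P}(F^\geo) + |T|$, where $T \coloneqq F\setminus F^\geo$. The plan is to verify both building set axioms at $F$ by reducing them to the corresponding axioms for $(P, \mathscr G)$ at $F_P$, plus some bookkeeping involving $T$.

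The first step is to identify $\max \widetilde{\mathscr G}_{\leq F}$. A set $\pi^{-1}(G)$ lies in $\widetilde{\mathscr G}_{\leq F}$ iff $G \in \mathscr G_{\leq F_P}$, since $\pi^{-1}(G)$ is $\Gamma$-stable and therefore contained in $F^\geo$. For atoms, I would show (i) for each $e \in T$, $\cl_{\widetilde P}(\{e\}) = \{e\}$, and (ii) every atom of $\widetilde P$ contained in $F^\geo$ is already contained in some $\pi^{-1}(G)$ with $G \in \max \mathscr G_{\leq F_P}$. For (i): $e \in T$ forces $\widetilde E_{\pi(e)} \not\subseteq F$, hence $|\widetilde E_{\pi(e)}|\geq 2$; then \cref{lem:closure}\cref{lem:extraclosure} combined with $\rk_{\widetilde P}(\widetilde E_j) = |\widetilde E_j|$ rules out the possibility that $\cl_{\widetilde P}(\{e\})$ contains any $\widetilde E_j$. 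For (ii): by \cref{lem:restriction} and \cref{lem:sum} applied to the factorization axiom for $(P, \mathscr G)$, $\widetilde P|_{F^\geo}$ splits as $\bigoplus_{G \in \max \mathscr G_{\leq F_P}} \widetilde P|_{\pi^{-1}(G)}$, so every rank-$1$ flat of the restriction lies in exactly one summand. Together these yield
\[ \max \widetilde{\mathscr G}_{\leq F} \;=\; \{\pi^{-1}(G) : G \in \max \mathscr G_{\leq F_P}\} \;\sqcup\; \{\{e\} : e \in T\}. \]

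The rank condition is then the short chain $\sum_{G} \rk_P(G) + |T| = \rk_P(F_P) + |T| = \rk_{\widetilde P}(F^\geo) + |T| = \rk_{\widetilde P}(F)$, using \cref{Gfixed}, the rank sum axiom for $(P, \mathscr G)$, and \cref{lem:closure}\cref{lem:closurerank}. The factorization condition reduces to the matroid decomposition
\[ \widetilde P|_F \;\cong\; \widetilde P|_{F^\geo} \;\oplus\; \bigoplus_{e \in T} U_{1,1}, \]
asserting that each $e \in T$ is a coloop of $\widetilde P|_F$. This is the main obstacle. I would prove it by showing that for any $F^\geo \subseteq S' \subsetneq F$ and any $e \in T \setminus S'$, $e \notin \cl_{\widetilde P}(S')$: otherwise \cref{lem:closure}\cref{lem:extraclosure} would force $\widetilde E_{\pi(e)} \subseteq \cl_{\widetilde P}(S') \subseteq F$, putting $\pi(e) \in F_P$ and contradicting $e \in T$. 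Given this decomposition, \cref{lem:restriction}, \cref{lem:sum}, and the factorization axiom for $(P, \mathscr G)$ then supply the desired lattice isomorphism $\prod_{\widetilde G \in \max \widetilde{\mathscr G}_{\leq F}} \mathscr L_{\widetilde P|_{\widetilde G}} \cong \mathscr L_{\widetilde P|_F}$.
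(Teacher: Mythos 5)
Your proposal takes essentially the same route as the paper's proof: both building-set axioms at a flat $F$ of $\widetilde P$ are reduced, via \cref{lem:sum}, \cref{lem:restriction}, and \cref{Gfixed}, to the axioms for $(P,\mathscr G)$ at the flat $\pi(F^{\geo})$, and the elements of $T=F\setminus F^{\geo}$ are then handled as rank-one direct summands using \cref{lem:closure}. You make explicit two steps the paper leaves implicit --- the identification of $\max\widetilde{\mathscr G}_{\leq F}$ and the splitting $\mathscr L_{\widetilde P|_F}\cong \mathscr L_{\widetilde P|_{F^{\geo}}}\times\prod_{e\in T}\mathscr L_{\widetilde P|_{\{e\}}}$ --- and your coloop argument for the latter is correct. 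The one soft spot is your justification of (i): the comparison $|\widetilde E_j|=\rk_{\widetilde P}(\widetilde E_j)\leq \rk_{\widetilde P}(\cl_{\widetilde P}(\{e\}))=1$ rules out $\widetilde E_j\subseteq \cl_{\widetilde P}(\{e\})$ only when $|\widetilde E_j|\geq 2$ or $j=\pi(e)$, so it does not by itself exclude a singleton fiber $\widetilde E_j=\{f\}$, $j\neq\pi(e)$, with $f$ parallel to $e$. That case is genuinely impossible, but one must say why: if $f\in\cl_{\widetilde P}(\{e\})$ then $e\in\cl_{\widetilde P}(\{f\})=\cl_{\widetilde P}(\widetilde E_j)$, which is geometric by \cref{cor:union}, forcing $\widetilde E_{\pi(e)}\subseteq\cl_{\widetilde P}(\widetilde E_j)$ and hence $|\widetilde E_{\pi(e)}|=\rk_{\widetilde P}(\widetilde E_{\pi(e)})\leq 1$, contradicting $|\widetilde E_{\pi(e)}|\geq 2$. (Alternatively, your own argument for the coloop claim with $S'=F\setminus\{e\}$ shows $e\notin\cl_{\widetilde P}(F\setminus\{e\})$, and since $\cl_{\widetilde P}(\{e\})\subseteq F$ this already gives $\cl_{\widetilde P}(\{e\})=\{e\}$.) With that one-line repair the proof is complete.
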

\begin{proof}
Let $F$ be a flat of $M = \tilde P$.
By \cref{lem:sum} and \cref{lem:restriction}, the map
\[ \prod_{G \in \max \widetilde \G_{\leq F^\geo}} \L_{M|_G} \to \L_{M|_F} \]
  factors into a chain of isomorphisms
  \[
    \prod_{G \in \max \widetilde \G_{\leq F^\geo}} \L_{M|_G}
    \cong \prod_{H \in \max \G_{\leq \pi(F^\geo)}} \L_{\widetilde{P|_H}}
    \cong \L_{\widetilde{P|_{\pi(F^\geo)}}} \cong \L_{M|_{F^\geo}},
\]
and by \cref{Gfixed},
\[
  \sum_{G \in \max \widetilde \G_{\leq F^\geo}} \rk_{M}(G)
  = \sum_{G \in \max \widetilde \G_{\leq F^\geo}} \rk_{P}(\pi(G))
  = \sum_{H \in \max \G_{\leq \pi(F^\geo)}} \rk_{P}(H)
  = \rk_P(\pi(F^\geo)) = \rk_M(F^\geo).
\]
Consequently, by \cref{lem:closure}\cref{lem:closurerank},
\[
  \prod_{G \in \max \widetilde\G_{\leq F}} \L_{M|_G}
   = \prod_{G \in \max \widetilde\G_{\leq F^\geo}}  \L_{M|_G} \times \prod_{i \in F \setminus F^\geo}  \L_{M|_{\{i\}}}
  \cong  \L_{M|_{F^\geo}} \times \prod_{i \in F \setminus F^\geo} \L_{M|_{\{i\}}} \cong \L_{M|_F},
\]
and
\[
  \sum_{G \in \max \widetilde{\mathscr{G}}_{\leq F}} \rk_{M}(G)
  = \rk_M(F^\geo) + \sum_{i \in F \setminus F^\geo} \rk_{M}(i)
  = \rk_M(F^\geo)+|F \setminus F^\geo|
  = \rk_M(F),
\]
as desired.
\end{proof}
\begin{lem}\label{lem:nested}
  If $\G$ is a geometric building set for $P$, then $\N \subseteq \G$ is $\G$-nested if and only if $\widetilde \N = \{ \pi^{-1}(F) : F \in \N\}$ is $\widetilde \G$-nested.
\end{lem}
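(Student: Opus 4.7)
The plan is to translate the nestedness condition across the bijection $F \leftrightarrow \pi^{-1}(F)$, using the polymatroid--matroid correspondence established in Section \ref{sec:multisymmetric}. Throughout, write $M = \widetilde P$. The crucial ingredient is the identity
\[ \cl_M(\pi^{-1}(A)) = \pi^{-1}(\cl_P(A)) \]
for every $A \subseteq E$. To establish this, observe that $\pi^{-1}(A)$ is $\Gamma$-stable, so by \cref{cor:union} the set $\cl_M(\pi^{-1}(A))$ is also geometric. By \cref{cor:polymat}, it therefore equals $\pi^{-1}(F)$ for a unique flat $F$ of $P$ with $F \supseteq A$, and \cref{Gfixed} then gives $\rk_P(F) = \rk_M(\pi^{-1}(F)) = \rk_M(\pi^{-1}(A)) = \rk_P(A)$, forcing $F = \cl_P(A)$.

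Since $F \subseteq G \iff \pi^{-1}(F) \subseteq \pi^{-1}(G)$, the correspondence $\N \leftrightarrow \widetilde \N$ takes pairwise incomparable subfamilies to pairwise incomparable subfamilies, in both directions. Combined with the closure identity above, for any $\{F_1, \ldots, F_k\} \subseteq \N$ pairwise incomparable with $k \geq 2$,
\[ \cl_M\bigl(\pi^{-1}(F_1) \cup \cdots \cup \pi^{-1}(F_k)\bigr) = \pi^{-1}(H), \quad \text{where } H \coloneqq \cl_P(F_1 \cup \cdots \cup F_k). \]
Thus the lemma will follow once I verify that, for each such $H$, the equivalence $H \in \G \iff \pi^{-1}(H) \in \widetilde \G$ holds.

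The forward implication is immediate from the definition of $\widetilde \G$. For the converse, suppose $\pi^{-1}(H) \in \widetilde \G$. Either $\pi^{-1}(H) = \pi^{-1}(G)$ for some $G \in \G$, in which case $H = G \in \G$, or $\pi^{-1}(H)$ is an atom of $\L_M$. The main obstacle---and the only subtle point---is ruling out the latter possibility, since it is the only way $\pi^{-1}(H)$ could land in $\widetilde \G$ without $H$ lying in $\G$. If $\pi^{-1}(H)$ were an atom, then $\rk_M(\pi^{-1}(H)) = 1$, so $\rk_P(H) = 1$ by \cref{Gfixed}. But by looplessness each $F_i$ is a nonempty flat of $P$ with $\rk_P(F_i) \geq 1$, and $F_i \subseteq H$ gives $\rk_P(F_i) = 1 = \rk_P(H)$; since $F_i$ is maximal among sets of its rank, we would conclude $F_i = H$ for every $i$, contradicting pairwise incomparability with $k \geq 2$. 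Hence $H \in \G$, completing the proof.
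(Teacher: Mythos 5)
Your proof is correct and follows essentially the same route as the paper's: both arguments reduce to the identity $\cl_M(\pi^{-1}(F_1)\cup\cdots\cup\pi^{-1}(F_k)) = \pi^{-1}(\cl_P(F_1\cup\cdots\cup F_k))$, which the paper extracts from \cref{cor:sublattice} and \cref{lift} and you derive directly from \cref{cor:union}, \cref{cor:polymat}, and \cref{Gfixed}. The one place you go beyond the paper is in explicitly ruling out the possibility that $\pi^{-1}(H)$ lies in $\widetilde\G$ only as an atom of $\L_{\widetilde P}$ rather than as $\pi^{-1}(G)$ for $G\in\G$; the paper's one-line proof silently elides this case, and your looplessness-plus-incomparability argument closes it cleanly. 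This extra step is genuinely needed for the "only if" direction as stated, so your write-up is, if anything, more complete.
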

\begin{proof}
  Set $M = \widetilde P$.
  Let $\{F_1, \ldots, F_k\}\subseteq \mathscr N$ be a set of pairwise incomparable flats.
  By \cref{cor:sublattice}, the geometric flats of $M$ form a sublattice of $\L_M$, isomorphic to $\L_P$ by \cref{lift}. Hence,
  \[
    \cl_M(\pi^{-1}(F_1) \cup \cdots \pi^{-1}(F_k)) \in \widetilde \G  \iff  \cl_P(F_1 \cup \cdots \cup F_k) \in \G.  \qedhere
  \]
\end{proof}

\begin{rmk}[Geometry of building sets]\label{rmk:geomOfBS}
  Let $\{V_i\}_i$ be a subspace arrangement in $V$, defining a polymatroid $P$.
  Let $\G$ be a building set for $P$.
  By \cite[\S1.6]{dCP95}, the \textit{wonderful compactification} of $\{V_i\}_i$ with respect to $\G$ can be constructed by blowing up $\P(V)$ along all subspaces $\cap_{i \in F} \P(V_i)$ with $F \in \G$, first blowing up those of dimension 0, then those of dimension 1, and so on.
Let $\{V_{ij}\}_{ij}$ be a hyperplane arrangement realizing $\widetilde P$, as in \cref{rmk:why-geometric}.
  Blowing up a codimension 1 subvariety is an isomorphism, so \cref{lem:nested} implies that the wonderful compactification of $\{V_i\}_i$ with respect to a building set $\G$ is isomorphic to the wonderful compactification of $\{V_{ij}\}_{ij}$ with respect to $\widetilde \G$.
\end{rmk}

\subsection{Bergman fans}
Let $P$ be a polymatroid on $E$, with minimal lift $\widetilde{P}$ on $\widetilde E$.
Let $\R^{\widetilde E}$ be the vector space spanned by $\e_i$ for $i \in \widetilde E$, and write $\e_S \coloneqq \sum_{i \in S} \e_i$ for $S \subseteq \widetilde E$. 
If $\mathscr S \subseteq 2^{\widetilde{E}}$ is a collection of subsets, write
\[
  \sigma_{\mathscr S} \coloneqq \cone(\e_S : S \in \mathscr S) \subseteq \R^{\widetilde E} / \R (1,1,\ldots,1).
\]
\begin{dfn}\label{dfn:bergmanfan}
  Let $P$ be a polymatroid on $E$, and $\G \subseteq \L_P$ a geometric building set of $P$.
  The \word{Bergman fan} associated to $(P, \G)$ is
  $\Sigma_{P, \G} \coloneqq \{\sigma_{\N}\}_\N$,
  where $\N$ ranges over all $\widetilde\G$-nested sets of $\widetilde P$ such that $\widetilde E \not \in \N$.
\end{dfn}
If $P$ is a matroid and $\G$ is the maximal geometric building set of $P$, then $\widetilde P = P$ and $\widetilde \G = \G$. In this case, $\Sigma_{P,\G}$ coincides with the Bergman fan of \cite[Definition 3.2]{AHK18}.

\begin{lem}
Let $P$ be a polymatroid and $\mathcal{G}$ be the maximal geometric building set of $P$. Then the $\mathcal{G}$-nested sets of $\widetilde{P}$ are in bijection with chains of flats $\mathscr F = \{\emptyset = F_0 \subsetneq F_1 \subsetneq F_2 \subsetneq \cdots \subsetneq F_k \subsetneq {E} \}$ of ${P}$ and a subset $S$ of $\widetilde E$ such that 
\[
\rk_{P}(F \cup \pi(T)) > \rk_{P}(F) + |T| \ \ \text{for every proper flat $F$ in $\mathscr{F}$ and every nonempty subset $T$ of $S\setminus F$.}
\]
\end{lem}

\begin{proof}
Let $\mathcal{N}$ be a nested set of $\tilde{\mathcal{G}}$, which consists of some geometric flats and some atoms. As the join of two geometric flats is geometric, we see that the geometric flats must form a chain $\{\emptyset = F_0 \subsetneq \pi^{-1}(F_1) \subsetneq \pi^{-1}(F_2) \subsetneq \cdots \subsetneq \pi^{-1}(F_k) \subsetneq \widetilde{E} \}$, where $F_1, \dotsc, F_k$ are flats of $P$. Let $S$ be the set of atoms in $\mathcal{N}$. It suffices to check that $S \cup \{\pi^{-1}(F_1), \dotsc, \pi^{-1}(F_k)\}$ is nested if and only if 
\[
\rk_{P}(F \cup \pi(T)) > \rk_{P}(F) + |T| \ \ \text{for every proper flat $F$ in $\mathscr{F}$ and every nonempty subset $T$ of $S\setminus F$.}
\]
If the inequality holds, then
$$\rk_P(F \cup \pi(T)) > \rk_P(F) + |T| \ge \rk_{\widetilde{P}}(\pi^{-1}(F) \cup T)$$
for any $F \in \{F_1, \dotsc, F_k\}$ and all nonempty $T \subset S \setminus \pi^{-1}(F)$. The left-hand side is the rank of the smallest geometric flat containing $\pi^{-1}(F) \cup T$, so the closure of $\pi^{-1}(F) \cup T$ cannot be geometric. This implies that $S \cup \{\pi^{-1}(F_1), \dotsc, \pi^{-1}(F_k)\}$ is nested. 

Now suppose that $S \cup \{\pi^{-1}(F_1), \dotsc, \pi^{-1}(F_k)\}$ is nested, but the inequality fails for some $F \in \{F_1, \dotsc, F_k\}$ and some $T \subset S \setminus \pi^{-1}(F)$. Let $G$ be the smallest geometric flat containing $\pi^{-1}(F) \cup T$. Then
$$\rk_{\widetilde{P}}(\pi^{-1}(F) \cup T) \le  \rk_{\widetilde{P}}(G) = \rk_P(F \cup \pi(T))  \le \rk_P(F) + |T|.$$
If the first inequality is an equality, then $\rk_{\widetilde{P}}(\pi^{-1}(F) \cup T) = \rk_{\tilde{P}}(G)$, and so the closure of $\pi^{-1}(F) \cup T$ is a geometric flat, contradicting that $\mathcal{N}$ is nested. 
Therefore $\rk_{\widetilde{P}}(\pi^{-1}(F) \cup T) < \rk_P(F) + |T| = \rk_{\widetilde{P}}(\pi^{-1}(F)) + |T|$, so there is a circuit $C \subset \pi^{-1}(F) \cup T$ with $C \setminus \pi^{-1}(F)$ nonempty. 
For any $c \in C \setminus \pi^{-1}(F)$, we have $c \in \operatorname{cl}_{\widetilde{P}}(\pi^{-1}(F) \cup C \setminus c)$, and so by Lemma~\ref{lem:closure}(\ref{lem:extraclosure}), 
$$\pi^{-1}(\pi(c)) \subset \operatorname{cl}_{\widetilde{P}}(\pi^{-1}(F) \cup C \setminus c).$$
Using this for all $c \in C \setminus \pi^{-1}(F)$, we have that 
$$\pi^{-1}(\pi(C)) \cup \pi^{-1}(F) \subset \operatorname{cl}_{\widetilde{P}}(\pi^{-1}(F) \cup C), \text{ so } \operatorname{cl}_{\widetilde{P}}(\pi^{-1}(\pi(C)) \cup \pi^{-1}(F)) = \operatorname{cl}_{\widetilde{P}}(\pi^{-1}(F) \cup C).$$
Note that $\operatorname{cl}_{\widetilde{P}}(\pi^{-1}(\pi(C)) \cup \pi^{-1}(F))$ is geometric. Because $C \setminus \pi^{-1}(F) \subset T$, this contradicts that $\mathcal{N}$ is nested. 
\end{proof}

\begin{cor}
  If $P$ is a polymatroid and $\G$ is the maximal geometric building set, then $\Sigma_{P, \G}$ coincides with the Bergman fan $\Sigma_P$ defined in the introduction (\cref{dfn:maximalBergman}). 
\end{cor}

\medskip
\begin{lem}\label{polytope}
  If $P$ is a polymatroid and $\G$ is a geometric building set of $P$, then $\Sigma_{P, \G}$ is a subfan of the normal fan of a convex polytope.
\end{lem}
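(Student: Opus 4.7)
The plan is to reduce to the matroid case by passing to the minimal multisymmetric lift. By \cref{liftedbs}, the lifted collection $\widetilde\G$ is a geometric building set on the matroid $\widetilde P$, and by \cref{lem:nested} the $\widetilde\G$-nested sets of $\widetilde P$ are in natural bijection with the $\G$-nested sets of $P$. Since $\Sigma_{P, \G}$ is defined entirely in terms of these nested sets and the rays $\e_{\pi^{-1}(F)}$, the statement immediately reduces to proving the analogous claim when $P$ is itself a matroid. I therefore assume $P = M$ is a matroid on the ground set $\widetilde E$.

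For a matroid $M$ with geometric building set $\G$, I would realize the nested set fan as a subfan of the normal fan of a nestohedron: consider the Minkowski sum
\[ Q \coloneqq \sum_{F \in \G} \Delta_F, \qquad \Delta_F \coloneqq \mathrm{conv}(\e_i : i \in F) \subset \R^{\widetilde E}. \]
The inner normal fan of $Q$ in $\R^{\widetilde E}/\R(1,\ldots,1)$ should contain each cone $\sigma_\N$ of $\Sigma_{P, \G}$: for every $\G$-nested set $\N$ not containing $\widetilde E$, the cone $\sigma_\N$ is the normal cone at a vertex of $Q$ obtained by making compatible choices of vertices in each summand $\Delta_F$ according to the nesting structure of $\N$. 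One can either invoke this as a known fact from the theory of nestohedra (Postnikov, Feichtner--Sturmfels, Feichtner--Yuzvinsky), or verify it directly by exhibiting, for each $\N$, a linear functional in the relative interior of $\sigma_\N$ that singles out the claimed vertex of $Q$.

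The main obstacle is the polytope-theoretic verification, i.e., showing that each $\sigma_\N$ appears exactly as a cone of the inner normal fan of $Q$, rather than as a union of smaller cones or as a proper subcone of a larger one. This is where the two axioms defining a geometric building set play a decisive role: the product decomposition $\prod_{G \in \max \G_{\leq F}} \L_{M|_G} \cong \L_{M|_F}$ and the rank-additivity $\sum_{G \in \max \G_{\leq F}} \rk(G) = \rk(F)$ together ensure that the vertex selections across the various $\Delta_F$ assemble consistently into a single vertex of $Q$ whose normal cone is precisely $\sigma_\N$. Once this matching is established, the lemma follows by taking $Q$ as the required polytope.
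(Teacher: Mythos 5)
Your strategy and your polytope are the right ones (the paper also uses the nestohedron $\sum_{F \in \widetilde\G}\Delta_F$, via \cite[Theorem 7.4]{P09}), and the reduction to $\widetilde P$ is immediate since $\Sigma_{P,\G}=\Sigma_{\widetilde P,\widetilde\G}$ by \cref{dfn:bergmanfan}. But the step you yourself flag as ``the main obstacle'' is exactly the content of the lemma, and the way you propose to discharge it does not work as stated. The nestohedron results of Postnikov and Feichtner--Sturmfels identify the normal fan of $Q=\sum_{F\in\widetilde\G}\Delta_F$ with the nested set fan of $\widetilde\G$ viewed as a building set of the \emph{Boolean} lattice $2^{\widetilde E}$, where nestedness of an antichain $\{F_1,\dots,F_k\}$ means $F_1\cup\cdots\cup F_k\notin\widetilde\G$. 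The cones $\sigma_\N$ of $\Sigma_{\widetilde P,\widetilde\G}$ are indexed by nested sets with respect to the lattice of flats of the matroid $\widetilde P$, where the condition is instead $\cl_{\widetilde P}(F_1\cup\cdots\cup F_k)\notin\widetilde\G$. These are different notions, so you cannot ``invoke the known fact'' without a bridge, and your appeal to the two geometric building set axioms for the matroid does not by itself supply one. (Also, for non-maximal $\N$ the cone $\sigma_\N$ is the normal cone of a positive-dimensional face, not of a vertex.)

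The bridge the paper uses is short: first, the geometric building set axioms imply that $\max\widetilde\G_{\leq F}$ partitions $F$ for every flat $F$, which shows $\widetilde\G$ is also a building set of the Boolean lattice $2^{\widetilde E}$ (equivalently, of the Boolean polymatroid lift $\widetilde B$); second, every $\widetilde\G$-nested set of $\widetilde P$ is Boolean-nested, because if the union $F_1\cup\cdots\cup F_k$ of pairwise incomparable members lay in $\widetilde\G$ it would already be a flat of $\widetilde P$, hence equal to its closure, contradicting matroid-nestedness. This gives an inclusion of fans $\Sigma_{\widetilde P,\widetilde\G}\subseteq\Sigma_{\widetilde B,\widetilde\G}$, and the latter is the (complete) normal fan of the nestohedron by Postnikov. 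With these two observations inserted in place of your unverified matching claim, your argument becomes the paper's proof; no direct polytope-theoretic computation with linear functionals is needed.
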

\begin{proof}
  Let $E$ be the ground set of $P$ and $B$ be the Boolean polymatroid with $\rk_B(i) = \rk_P(i)$ for all $i \in E$.
  Suppose $\G$ is a building set for $P$. The definition of geometric building set implies for any flat $F$ of $P$, $\max \G_{\leq F}$ is a partition of $F$. Hence, $\widetilde \G$ is a building set of both $\widetilde B$ and $\widetilde P$, and there is an inclusion of fans
  $\Sigma_{P, \G} = \Sigma_{\widetilde P, \widetilde \G} \subseteq \Sigma_{\widetilde B, \widetilde \G}$ (In fact, if all single-element subsets of $E$ are flats of $P$, then $\G$ is a geometric building set for $B$, and we obtain $\Sigma_{P,\G} \subset \Sigma_{\widetilde B, \widetilde \G} = \Sigma_{B, \G}$).
  The lattice of flats of $\widetilde B$ is isomorphic to the lattice of subsets of $\widetilde E$,  so $\Sigma_{\widetilde B, \widetilde \G}$ is the normal fan of a convex polytope by \cite[Theorem 7.4]{P09}.
\end{proof}

By \cite{Bri96}, for any polymatroid $P$ and a geometric building set $\G$, $A(\Sigma_{P,\G})$ has the following presentation.
\begin{prop}\label{fanchow}
  The \word{Chow ring} of $\Sigma_{P,\G}$ satisfies 
  \[
    A(\Sigma_{P,\G}) = \Z[z_G : G \in \widetilde \G \setminus \{\widetilde E\}] / I_{P, \G}
  \]
  where $I_{P, \G}$ is the ideal generated by 
  \begin{align*}
      z_{G_1} \cdots z_{G_k} \quad &\text{for any not $\widetilde \G$-nested collection $\{G_1, \ldots, G_k\}$},\\
      \sum_{i \in G} z_G - \sum_{j \in F} z_F \quad &\text{for any } i,j \in \widetilde E.
  \end{align*}
In particular, the Chow ring of $\Sigma_P$ satisfies
\[
  A(\Sigma_P) = \Z[z_F : F \text{ nonempty proper flat of $P$}] \otimes \Z[z_i : i \in \widetilde E] / I_P,
\]
where $I_P$ is an ideal generated by the following polynomials where $z_\emptyset$ is replaced by 1 wherever it appears:
\begin{align*}
  z_{F_1} z_{F_2},  \quad &\text{$F_1$ and $F_2$ are incomparable proper flats of $P$,} \\
  z_F \prod_{i \in T} z_i,  \quad &\text{$F$ is a proper flat and  $T\subseteq \widetilde E\setminus \pi^{-1}(F)$ is nonempty satisfying $\rk(F \cup \pi(T)) \leq \rk(F) + |T|$,} \\
  \sum_{i \in F} z_F - \sum_{j \in G} z_G,  \quad &\text{$i$ and $j$ are elements of $\widetilde E$.}
\end{align*}

\end{prop}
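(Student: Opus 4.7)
The plan is to apply the theorem of Brion cited just before the statement: for a unimodular simplicial fan $\Sigma$ in a lattice $N$, the Chow ring of the associated smooth toric variety is presented as $\Z[z_\rho : \rho\text{ a ray of }\Sigma]$ modulo the Stanley--Reisner ideal generated by $\prod_{\rho\in S}z_\rho$ over subsets $S$ of rays that do not span a cone of $\Sigma$, together with the linear relations $\sum_\rho \langle m, u_\rho\rangle z_\rho$ indexed by $m$ in the dual lattice $M=N^\vee$, where $u_\rho$ is the primitive generator of $\rho$. In our setting, the rays of $\Sigma_{P,\G}$ are $\R_{\geq 0}\cdot\e_G$ for $G\in\widetilde\G\setminus\{\widetilde E\}$ (we exclude $\widetilde E$ because $\e_{\widetilde E}=0$ in $N_{\widetilde E}$), a subset of rays spans a cone iff the corresponding subset of $\widetilde\G$ is $\widetilde\G$-nested by \cref{dfn:bergmanfan}, and $M=\{u\in\Z^{\widetilde E}:\sum_i u_i=0\}$ is generated by the differences $e_i^*-e_j^*$, which pair with $\e_G$ to yield $[i\in G]-[j\in G]$. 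This immediately gives the first stated presentation.

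To deduce the specific presentation of $A(\Sigma_P)$, I would first observe that $\widetilde P$ is a simple matroid: looplessness is inherited from $P$, while any two distinct elements of $\widetilde E_i$ have rank $2$ in $\widetilde P$ by the rank formula in \cref{lift} combined with $|\widetilde E_i|=\rk_P(i)\geq 2$. Consequently, the atoms of $\L_{\widetilde P}$ are exactly the singletons $\{i\}$, so in the maximal case $\widetilde\G\setminus\{\widetilde E\}$ consists of the sets $\pi^{-1}(F)$ for nonempty proper flats $F$ of $P$ (whose Chow classes we denote $z_F$) together with the singletons $\{i\}$ for $i\in\widetilde E$ (classes $z_i$), matching the listed generators. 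The linear relations specialize correspondingly: for each $i\in\widetilde E$, the sum $\sum_{G\ni i}z_G$ breaks as $z_i+\sum_{F\ni\pi(i)}z_F$, where the latter sum is over nonempty proper flats of $P$.

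Finally, I would translate the Stanley--Reisner relations. The Stanley--Reisner ideal is generated by products indexed by \emph{minimal} non-nested sub-collections, i.e.\ pairwise incomparable sub-collections of size $\geq 2$ whose union's closure lies in $\widetilde\G$. In the maximal setting these split into two families. If the sub-collection consists of proper flats only, then any incomparable pair $\pi^{-1}(F_1),\pi^{-1}(F_2)$ already exhibits non-nestedness because $\cl_P(F_1\cup F_2)\in\G$; this gives the relations $z_{F_1}z_{F_2}$. Otherwise the sub-collection contains at least one atom and has the form $\{\pi^{-1}(F)\}\cup\{\{i\}:i\in T\}$ for some $T\subseteq\widetilde E\setminus\pi^{-1}(F)$ nonempty — including the atom-only case via the convention $F=\emptyset$, $z_\emptyset=1$. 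The characterization established in the proof of the lemma immediately preceding \cref{polytope} states precisely that $\cl_{\widetilde P}(\pi^{-1}(F)\cup T)\in\widetilde\G$ iff the rank inequality $\rk_P(F\cup\pi(T))\leq\rk_P(F)+|T|$ holds, yielding the relations $z_F\prod_{i\in T}z_i$. The main point requiring care is verifying this classification is exhaustive — namely, that every minimal non-nested sub-collection (potentially mixing both types of generators) reduces to one of the two listed forms — which is a direct combinatorial check using that any incomparable pair of flats already witnesses non-nestedness.
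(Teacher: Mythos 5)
Your first paragraph is exactly the paper's (implicit) argument: the paper offers no proof beyond the citation to Brion, and your derivation of the first presentation --- rays indexed by $\widetilde\G\setminus\{\widetilde E\}$, Stanley--Reisner relations from non-nested collections, linear relations from $e_i^*-e_j^*$ --- is correct. The gaps are in the passage to the ``in particular'' presentation of $A(\Sigma_P)$, and they stem from one false claim: that $\widetilde P$ is simple and that the listed generators biject with the rays. Your simplicity argument only treats pairs inside a single fiber $\widetilde E_i$; for $j\in\widetilde E_i$, $j'\in\widetilde E_{i'}$ with $i\neq i'$ one computes $\rk_{\widetilde P}(\{j,j'\})=\min(2,\rk_P(i)+1,\rk_P(i')+1,\rk_P(\{i,i'\}))$, which equals $1$ whenever $\rk_P(\{i,i'\})=1$. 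So $\widetilde P$ can have parallel elements, and atoms of $\L_{\widetilde P}$ need not be singletons (e.g.\ $P$ the matroid on $\{1,2,3\}$ with $1,2$ parallel: the atom is $\{1,2\}$). Moreover, even when all atoms are singletons, the two families $\{\pi^{-1}(F)\}$ and $\{\{i\}\}$ are not disjoint: if $\{i\}$ is a rank-one flat of $P$ with $\pi^{-1}(i)=\{j\}$, then $z_{\{i\}}$ and $z_j$ are distinct variables for the same ray; and if $\rk_P(\pi(j))=1$ but $\{j\}$ is not a flat of $\widetilde P$, then $z_j$ corresponds to no ray at all. The second presentation therefore has genuinely redundant generators, and its equivalence with the first is not a relabeling: one must check that each redundant $z_j$ is killed by the relation with $F=\emptyset$, $T=\{j\}$ (which applies precisely when $\rk_P(\pi(j))\le 1$), and that after eliminating these variables the remaining relations generate the Stanley--Reisner-plus-linear ideal. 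Your proof asserts the generators ``match'' and so never performs this elimination.

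The second gap is in your use of the lemma preceding \cref{polytope}. That lemma is a ``for all nonempty $T\subseteq S\setminus\pi^{-1}(F)$'' equivalence, not a pointwise one: its proof shows that if $\rk_P(F\cup\pi(T))<\rk_P(F)+|T|$ then some \emph{subset} $T'=C\setminus\pi^{-1}(F)$ of $T$ has geometric closure, not that $\pi^{-1}(F)\cup T$ itself does. So the relation $z_F\prod_{i\in T}z_i$ for such $T$ is not itself (the image of) a Stanley--Reisner generator; it lies in the ideal only because it is divisible by $z_F\prod_{i\in T'}z_i$, and one must separately verify that $\{\pi^{-1}(F)\}\cup\{\{i\}:i\in T'\}$ is a non-nested antichain of size at least two consisting of elements of $\widetilde\G$ (which again runs into the issue that some $\{i\}$ may fail to be an atom). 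The converse containment --- that every minimal non-nested collection containing at most one geometric element yields a relation of the stated form --- also requires knowing that the non-geometric elements of $\widetilde\G$ are exactly the singletons $\{j\}$ with $\rk_P(\pi(j))\ge 2$, which follows from \cref{lem:closure}\cref{lem:closurerank} but not from simplicity of $\widetilde P$. None of this is fatal to the strategy, but as written the reduction to the explicit presentation of $A(\Sigma_P)$ does not go through.
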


We identify $A^1(\Sigma_{P,\G})$ with the space of piecewise linear functions on the support of $\Sigma_{P, \G}$, modulo global linear functions. Explicitly, a piecewise linear function $\ell$ is a representative of 
\[
  \sum_{\mathbf{u}_F} \ell(\mathbf{u}_F) z_F \in A^1(\Sigma_{P, \G}),
\]
where the sum is over all primitive ray generators $\mathbf{u}_F$ of rays of $\Sigma_{P, \G}$.

\begin{rmk}\label{fypresentation}
  A slightly different presentation of $A(\Sigma_{P, \G})$ is used by \cite{FY04}. 
  For $F \in \widetilde \G$, set
    \[ y_F = \begin{cases} -\sum_{i \in G} z_G, &F = \widetilde E \\ z_F, &\text{otherwise.} \end{cases} \]
    In terms of the $y_F$'s, $A(\Sigma_{P, \G})$ is defined by the ideal $I_{FY}$ generated by
  \begin{align*}
    y_{G_1} \cdots &y_{G_k}, \quad \text{$\{G_1, \ldots, G_k\}$ not $\widetilde \G$-nested and }\\
    \sum_{i \in G}& y_G, \quad i \in \widetilde E.
  \end{align*}
\end{rmk}
\begin{rmk}[Tropicalization and the Bergman fan]\label{tropicalization}
  Suppose $P$ is the matroid realized by an essential hyperplane arrangement $V_1,\ldots,V_n$ in $V$, defined by linear functionals $\ell_1,\ldots,\ell_n$.
  The inclusion
  \[ \P(V \setminus \cup_i V_i) \hookrightarrow \P((\mathbb{F}^*)^n), \quad v \mapsto [\ell_1(v): \ldots: \ell_n(v)] \]
  shows that $\P(V \setminus \cup_i V_i)$ is a very affine variety. Its tropicalization is the support of $\Sigma_P$, by \cite[\S9.3]{S02} and \cite[\S3]{AK06} (see also \cite[Theorem 4.1]{FS05}).
  The corresponding statement for realizable polymatroids does not make sense: the complement of a subspace arrangement may not be very affine, so tropicalization  \cite[Definition 3.2.1]{MS15} is not defined. Nevertheless, if $\{V_i\}_i$ is a subspace arrangement (so $P$ is a polymatroid), then generic hyperplanes $\{V_{ij}\}_{ij}$ realizing $\widetilde P$ as in \cref{rmk:why-geometric} define a subtorus of $\P(\prod_i V/V_i)$. Tropicalizing this torus's intersection with $\P(V \setminus \cup_i V_i)$
  gives the support of $\Sigma_{\widetilde P}$, which coincides with that of $\Sigma_P$.
 
\end{rmk}

\section{Chow rings of polymatroids}\label{sec:chow}
If $\cS$ is a collection of sets, write $\cup \cS$ for the union of the elements of $\cS$.
In \cite[Section 4]{PP21}, Pagaria and Pezzoli define the Chow ring of a polymatroid as follows\footnote{Unlike \cite{PP21}, we use $\Z$-coefficients.}.

\begin{dfn}\label{dfn:ppchow}
 Let $P$ be a polymatroid and $\G \subseteq \L_P$ a geometric building set. 
 The \word{Chow ring} of $(P, \G)$ is
 \[
   \DP(P, \G) \coloneqq \Z[x_F : F \in \G] / I,
 \]
 where $I$ is the ideal generated by
\[
x_{G_1} \cdots x_{G_k} \Big(\sum_{H \geq G} x_H \Big)^b
\]
for $G \in \G$, $\mathscr S = \{G_1, \ldots, G_k\} \subseteq \G$, and
$b \geq \rk_P(G) - \rk_P( \cup \mathscr S_{<G} ).$
\end{dfn}
When $P$ is realizable by an arrangement of subspaces, $\DP(P, \G)$ is the cohomology ring of De Concini \& Procesi's wonderful compactification of the arrangement complement.
In this section, we make use of the theory of Gr\"obner bases. For background on this subject, see \cite[Chapter 15]{E95}.

\begin{thm}\label{thm:isomorphism}
  Let $P$ be a polymatroid on $E$, $\widetilde E$ the ground set of $\widetilde P$, and $\pi\colon  \widetilde E \to E$ the projection.
  There is an isomorphism $\operatorname{DP}(P, \mathscr G) \cong A(\Sigma_{P, \G})$ sending $x_F$ to $y_{\pi^{-1}(F)}$.
\end{thm}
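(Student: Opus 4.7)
The plan is to define a graded ring homomorphism $\phi: \DP(P, \G) \to A(\Sigma_{P, \G})$ by $x_F \mapsto y_{\pi^{-1}(F)}$ using the Feichtner–Yuzvinsky presentation of \cref{fypresentation}, and prove it is an isomorphism in three steps. The key observation is that by \cref{dfn:bergmanfan}, $\Sigma_{P, \G}$ is literally the Bergman fan of the matroid $\widetilde P$ with respect to the building set $\widetilde \G$, so $A(\Sigma_{P, \G}) = A(\Sigma_{\widetilde P, \widetilde \G})$. The matroid case $\DP(\widetilde P, \widetilde \G) \cong A(\Sigma_{\widetilde P, \widetilde \G})$ sending $x_H$ to $y_H$ is already known (\cite{FY04}) and may be imported freely.

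For well-definedness, I would check that each defining relation $x_{G_1}\cdots x_{G_k}\bigl(\sum_{H\in\G,\, H\geq G} x_H\bigr)^b$ of $\DP(P, \G)$, where $b\geq \rk_P(G) - \rk_P(\cup\mathscr S_{<G})$, maps under $\phi$ to an instance of the matroid relation in $A(\Sigma_{\widetilde P, \widetilde \G})$ indexed by $(\pi^{-1}(G), \{\pi^{-1}(G_i)\}, b)$. The rank condition transfers by \cref{Gfixed}, since $\rk_{\widetilde P}(\pi^{-1}(G)) = \rk_P(G)$ and $\rk_{\widetilde P}(\cup \pi^{-1}(\mathscr S_{<G})) = \rk_P(\cup \mathscr S_{<G})$. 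The only subtlety is that the matroid relation sums over all $H' \in \widetilde \G$ with $H'\geq \pi^{-1}(G)$, which a priori may include atoms of $\widetilde P$ not of the form $\pi^{-1}(F)$ for $F \in \G$; but any such atom is a rank-$1$ flat containing $\pi^{-1}(G)$, forcing $\rk_P(G) = 1$, in which case the atom equals $\pi^{-1}(G)$ itself and is already included in the image sum via $H = G$. So the two sums coincide, and the image vanishes.

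For surjectivity, $\widetilde \G = \{\pi^{-1}(F) : F \in \G\} \cup \{\text{atoms of }\widetilde P\}$ by \cref{liftedbs}; the generators $y_{\pi^{-1}(F)}$ lie in the image as $\phi(x_F)$. For an atom $A$ not of this form, picking any $i \in A$ and noting that $A = \cl_{\widetilde P}(\{i\})$ is the unique atom of $\widetilde P$ containing $i$, the linear relation $\sum_{H \in \widetilde \G,\, H\ni i} y_H = 0$ isolates $y_A = -\sum_{F \in \G,\, \pi(i)\in F} y_{\pi^{-1}(F)}$, which lies in the image. Injectivity would then follow from a Hilbert series comparison: the right-hand side has a known monomial basis indexed by $\widetilde \G$-nested sets of $\widetilde P$, which by \cref{lem:nested} are in bijection with $\G$-nested sets of $P$, matching the Hilbert series of $\DP(P, \G)$ established in \cite{PP21}. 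The main obstacle is the careful bookkeeping in the well-definedness step: one must match the sum in the Pagaria–Pezzoli relation (ranging only over $\G$) to the matroid sum (ranging over $\widetilde \G$) and verify that the discrepancies involving atoms vanish.
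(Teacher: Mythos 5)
Your map, the verification that the Pagaria--Pezzoli relations land in $I_{FY}$, and the surjectivity argument all match the paper's proof; the difference is in how you prove injectivity. The paper avoids any appeal to the dimension of $\DP(P,\G)$: it observes that the Feichtner--Yuzvinsky generators of $I_{FY}$ form a Gr\"obner basis for a lexicographic order that eliminates the atom variables $y_i$, so by \cite[Proposition 15.29]{E95} the generators not involving atoms form a Gr\"obner basis for $\im(\ph)\cap I_{FY}$; since each of these is the image of a Pagaria--Pezzoli generator, $\ph^{-1}(I_{FY})=I_{DP}$. This is self-contained and yields \cref{cor:gb} and \cref{cor:basis} as byproducts. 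Your route instead compares graded ranks, importing the monomial basis of $\DP(P,\G)$ from \cite{PP21}; this is legitimate (a surjection of finite-rank free $\Z$-modules of equal rank is an isomorphism, and freeness is supplied by the two bases), but it is weaker in that it cannot then re-derive the \cite{PP21} basis without circularity, which is one of the stated corollaries of the theorem.

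One step of your injectivity argument is stated incorrectly, though it is repairable. It is not true that $\widetilde\G$-nested sets of $\widetilde P$ are in bijection with $\G$-nested sets of $P$: \cref{lem:nested} gives only an injection, and $\widetilde\G$ contains atoms of $\widetilde P$ (including non-geometric ones) that form nested sets with no preimage in $\G$. What saves the count is the exponent constraint in the Feichtner--Yuzvinsky basis: a variable $y_H$ can appear in a basis monomial only if $\rk(H)>\rk(\cup\N_{<H})+1\geq 1$, so no atom variable ever appears with positive exponent, and the surviving basis monomials involve only the geometric flats $\pi^{-1}(F)$ with $F\in\G$. After this reduction, \cref{lem:nested} and \cref{Gfixed} do identify the two sets of basis monomials degree by degree. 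You should make this reduction explicit rather than asserting a bijection of nested sets. Your handling of the atoms in the well-definedness step (an atom containing $\pi^{-1}(G)$ must equal $\pi^{-1}(G)$ by looplessness) is correct and is a point the paper glosses over.
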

\begin{proof}
    Let $I_{DP} \subseteq \Z[x_F : F \in \G]$ be the defining ideal of 
  $\DP(P, \G)$, and let $I_{FY} \subseteq \Z[y_F : F \in \widetilde \G]$ be the defining ideal of $A(\Sigma_{P,\G})$ as in \cite{FY04} (See
  \cref{fypresentation}). We define the following map on polynomial rings.
  \[
   \ph\colon  \Z[x_F : F \in \G] \to \Z[y_F : F \in \widetilde \G], \quad x_F \mapsto y_{\pi^{-1}(F)}
  \]
    First we show that $\varphi(I_{DP}) \subseteq I_{FY}$. Write 
  $f \in I_{DP}$ for one of the defining relations of $I_{DP}$:
\[ 
  f = \Big(\prod_{F \in \mathscr S} x_F \Big)\Big(\sum_{\mathscr G \ni 
  H \geq G} x_H \Big)^b.
  \]
  By \cite[Theorems 1 and 3]{FY04}, $I_{FY}$ contains the following two types of polynomials:
  \begin{align*}
\prod_{F \in \mathscr S}& y_F, \qquad \mathscr S \text{ not $\widetilde{\mathscr{G}}$-nested,} \\
    \prod_{F \in \N} y_F \Big( \sum_{H \geq G}& y_G \Big)^d, \quad \text{$\N$ a nested antichain, $\cup \N < G$, and $d = \rk(G) - \rk(\cup N)$.} 
\end{align*}
If $\mathscr S$ is not $\mathscr G$-nested, then $\tilde{\mathscr{S}} \coloneqq \{ \pi^{-1}(F) : F \in \mathscr S\}$ is not $\tilde{\mathscr G}$-nested by \cref{lem:nested}. Hence, $\ph(f)$ is divisible by a relation of the first type. 
Otherwise, $\mathscr S$ is $\mathscr{G}$-nested, so $\tilde {\mathscr {S}}$ is $\widetilde{\mathscr{G}}$-nested.
In this case, $\ph(f)$ is divisible by a relation of the second type because
\begin{align*}
 b &\geq
\rk_P(G) - \rk_P(\cup \mathscr S_{<G}) 
  = \rk_{\tilde P}(\pi^{-1}(G)) - \rk_{\tilde P}(\cup \mathscr{\tilde S}_{< \pi^{-1}(G)}) 
  = \rk_{\tilde P}(\pi^{-1}(G)) - \rk_{\tilde P}(\cup \max \mathscr{\tilde S}_{<\pi^{-1}(G)}).
\end{align*}
This proves that $\varphi(I_{DP}) \subseteq I_{FY}$, so $\ph$ descends to
$\bar \ph\colon  \DP(P,\G) \to A(\Sigma_{P,\G})$.

If $F \in \widetilde \G$ is a flat of rank greater than 1, then $y_{F}$ is in the image of $\bar \ph$.
By the linear relation $\sum_{i \in G} y_G = 0$, it follows that $y_i$ is also in the image of $\bar\ph$. Therefore, $\bar\ph$ is surjective. 
It remains to show that $\bar \ph$ is injective.
By \cite[Theorem 2]{FY04}, the generators of $I_{FY}$ in the previous paragraph
are a Gr\"obner basis with respect to any lexicographic monomial order $<$ in which $F_1 \subseteq F_2$ implies $y_{F_1} > y_{F_2}$.
Any such order is an elimination order with respect to $\{y_i: i \in \widetilde E\}$.
By \cite[Proposition 15.29]{E95}\footnote{Eisenbud's proof of this statement works over $\Z$ because all leading coefficients in our Gr\"obner basis are 1.\label{zgbnote}}, 
the generators of $I_{FY}$ in the previous paragraph
that do not involve any $y_i$, $i \in \widetilde E$, are a Gr\"obner basis for $\im(\ph) \cap I_{FY}$.
Any such polynomial is the image of a generator of $I_{DP}$, so $\ph^{-1}(I_{FY}) = I_{DP}$.
This implies $\bar \ph$ is an isomorphism.
\end{proof}
The proof of \cref{thm:isomorphism} also shows the following, originally obtained from \cite[Corollary 2.8]{PP21}.
\begin{cor}\label{cor:gb}
  The defining relations of $\DP(P, \G)$ in 
  \cref{dfn:ppchow} form a Gr\"obner basis with respect to any lexicographic $<$ such that $x_{F_1} < x_{F_2}$ whenever $F_1 \supsetneq F_2$.
\end{cor}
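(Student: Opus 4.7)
Plan: I will deduce the corollary from the Gröbner basis computation already embedded in the proof of \cref{thm:isomorphism}. Fix any lexicographic order $<_x$ on $\Z[x_F : F \in \G]$ satisfying $x_{F_1} < x_{F_2}$ whenever $F_1 \supsetneq F_2$. The strategy is to extend $<_x$ to a compatible lexicographic order $<_y$ on $\Z[y_F : F \in \widetilde \G]$, invoke the Feichtner--Yuzvinsky Gröbner basis result together with the elimination principle \cite[Prop.~15.29]{E95}, and pull back through $\ph$.

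To construct $<_y$, I place every variable $y_i$ with $i \in \widetilde E$ strictly above every $y_{\pi^{-1}(G)}$ for $G \in \G$, and order the variables $y_{\pi^{-1}(G)}$ among themselves so as to match $<_x$ under $\ph$. A short case analysis shows that the hypothesis $F_1 \subseteq F_2 \Rightarrow y_{F_1} >_y y_{F_2}$ required by \cite[Theorem 2]{FY04} holds: atoms are mutually incomparable in the containment order; atoms placed on top dominate any $y_{\pi^{-1}(G)}$; and for $\pi^{-1}(F_1) \subsetneq \pi^{-1}(F_2)$ with $F_i \in \G$ the inequality reduces to the containment hypothesis on $<_x$. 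Arranging a single lex order that simultaneously respects the Feichtner--Yuzvinsky hypothesis and serves as an elimination order for $\{y_i : i \in \widetilde E\}$ is the main subtlety of the argument; the two-block construction above does this uniformly for any $<_x$.

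With $<_y$ in hand, \cite[Prop.~15.29]{E95} says that the generators of $I_{FY}$ not involving any $y_i$ form a Gröbner basis for $I_{FY} \cap \im(\ph)$. Pulling back via $\ph$, which is an injection of polynomial rings with image $\Z[y_{\pi^{-1}(G)} : G \in \G]$, yields a Gröbner basis for $\ph^{-1}(I_{FY}) = I_{DP}$ w.r.t. $<_x$. By the proof of \cref{thm:isomorphism}, each pulled-back element is a defining relation of $\DP(P, \G)$: type-1 FY relations pull back to squarefree monomials $\prod_{G \in \mathscr{S}'} x_G$ indexed by non-nested $\mathscr{S}' \subseteq \G$, and type-2 relations pull back to $\prod_{G' \in \N'} x_{G'}\bigl(\sum_{H \ge G''} x_H\bigr)^d$ with $d = \rk_P(G'') - \rk_P(\cup \N')$. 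Thus this Gröbner basis is a subset of the defining relations of $\DP(P, \G)$, and since enlarging a Gröbner basis by further elements of the ideal preserves the Gröbner property, the entire set of defining relations is a Gröbner basis for $I_{DP}$ w.r.t. $<_x$.
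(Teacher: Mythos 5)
Your proof is correct and takes essentially the same route as the paper, which derives the corollary directly from the Gr\"obner-basis and elimination argument inside the proof of \cref{thm:isomorphism}. Your explicit two-block construction of the order $<_y$ (atoms on top, the $y_{\pi^{-1}(G)}$ ordered to match $<_x$) usefully makes precise the elimination-order step that the paper leaves implicit; otherwise the arguments coincide.
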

This recovers the monomial basis of \cite{PP21}.
\begin{cor}\label{cor:basis}
  The following monomials are a $\Z$-basis of $\DP(P,\G)$:
  \[
    x_{G_1}^{a_1} \cdots x_{G_k}^{a_k}
  \]
  where $\N = \{G_1, \ldots, G_k\}$ is a nested set of $\G$, and 
  \[ 1 \leq a_i < \rk(F_i) - \rk(\cup \N_{<F_i})\quad \text{for all $1 \leq i \leq k$.} \]
   
\end{cor}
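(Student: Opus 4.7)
The plan is to deduce this directly from Corollary \ref{cor:gb}: since the defining relations of $\DP(P,\G)$ form a Gröbner basis in the specified lex order, a $\Z$-basis of $\DP(P,\G)$ is given by the standard monomials, that is, the monomials not divisible by any leading monomial of a Gröbner basis element. Working over $\Z$ is harmless because every leading coefficient is $1$ (cf.\ footnote \ref{zgbnote}). So the task reduces to checking that these standard monomials coincide with the list in the statement.

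The first step is to compute the leading terms. Under the chosen lex order, $x_G$ is the largest among all variables $x_H$ with $H \ge G$, since $G$ is the smallest such flat. Hence the leading monomial of a relation
\[ x_{G_1}\cdots x_{G_k}\Big(\sum_{H\ge G} x_H\Big)^b, \qquad b \ge \rk(G) - \rk(\cup\mathscr S_{<G}),\ \mathscr S = \{G_1,\ldots,G_k\}, \]
is $x_{G_1}\cdots x_{G_k}\, x_G^b$.

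Now I would argue both directions. For one direction, suppose $\N = \{G_1,\ldots,G_k\}$ is nested and $a_i < \rk(G_i) - \rk(\cup\N_{<G_i})$ for every $i$, and suppose for contradiction that a leading monomial $x_{F_1}\cdots x_{F_j}\, x_G^b$ divides $x_{G_1}^{a_1}\cdots x_{G_k}^{a_k}$. Then $G = G_{i_0}$ for some $i_0$, each $F_\ell$ lies in $\N$, and $a_{i_0} \ge b$. Since $\{F_\ell : F_\ell < G\} \subseteq \N_{<G_{i_0}}$, monotonicity of rank yields $b \ge \rk(G_{i_0}) - \rk(\cup\N_{<G_{i_0}})$, contradicting the bound on $a_{i_0}$. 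Conversely, if $\N$ is not nested, pick an incomparable subfamily $\{F_1,\ldots,F_j\}\subseteq\N$ with $j\ge 2$ and $G := \cl(\cup F_\ell)\in\G$; each $F_\ell$ is then strictly below $G$, so $\rk(\cup\mathscr S_{<G}) = \rk(G)$ and $b=0$ gives a relation whose leading monomial $\prod x_{F_\ell}$ divides the monomial. If $\N$ is nested but some $a_{i_0}\ge \rk(G_{i_0})-\rk(\cup\N_{<G_{i_0}})$, taking $G = G_{i_0}$, $\mathscr S = \N_{<G_{i_0}}$, $b = a_{i_0}$ yields a Gröbner basis relation whose leading monomial $\prod_{F\in\N_{<G_{i_0}}} x_F \cdot x_{G_{i_0}}^{a_{i_0}}$ divides the monomial. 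So in either case the monomial fails to be standard.

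There is no serious obstacle: the only subtlety is the rank-monotonicity step in the first direction, and the argument that $F_\ell < G$ strictly in the non-nested case (which uses $j \ge 2$ and pairwise incomparability). Everything else is bookkeeping against Corollary \ref{cor:gb}.
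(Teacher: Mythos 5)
Your route is the one the paper itself takes (its proof reads ``Immediate from \cref{thm:isomorphism} and [FY04, Corollary 1] (or \cref{cor:gb} and [E95, Theorem 15.3])''), with the identification of the standard monomials carried out explicitly rather than delegated to the citations. The computation of leading monomials is right: under the given lex order $x_G$ is the largest variable among $\{x_H : H \geq G\}$, so the relation indexed by $(\mathscr S, G, b)$ has leading monomial $\prod_{F \in \mathscr S} x_F \cdot x_G^{\,b}$ with leading coefficient $1$, and the converse direction (exhibiting a dividing leading monomial when $\N$ is non-nested or some exponent is too large) is complete and correct.

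There is one missing case in the forward direction. You assume that if a leading monomial $x_{F_1}\cdots x_{F_j}x_G^b$ divides $x_{G_1}^{a_1}\cdots x_{G_k}^{a_k}$ then $G = G_{i_0}$ for some $i_0$; this only follows when $b \geq 1$. The defining relations do include $b = 0$ instances, namely the pure monomials $x_{F_1}\cdots x_{F_j}$ arising whenever $\rk(G) \leq \rk(\cup\mathscr S_{<G})$, i.e.\ $\cl_P(\cup\mathscr S_{<G}) = G$ --- and there $G$ need not belong to $\N$ at all, so your exponent-comparison argument does not apply. The fix uses nestedness rather than the exponent bounds: if such a relation had $\mathscr S \subseteq \N$, let $H_1,\dotsc,H_m$ be the maximal elements of $\mathscr S_{<G}$. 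If $m \geq 2$ they form an incomparable subfamily of $\N$ with $\cl_P(H_1\cup\dotsb\cup H_m) = G \in \G$, contradicting nestedness; if $m = 1$ then $\cl_P(\cup\mathscr S_{<G}) \subseteq H_1 \subsetneq G$, a contradiction; and if $m = 0$ then $\rk(G) \leq \rk(\emptyset) = 0$ contradicts looplessness. With this case added your argument is complete.
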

\begin{proof}
Immediate from \cref{thm:isomorphism} and \cite[Corollary 1]{FY04} (or \cref{cor:gb} and \cite[Theorem 15.3]{E95}).
\end{proof}
\begin{rmk}
  In \cite[Corollary 2.8]{PP21}, degree-lexicographic order (also called ``homogeneous lexicographic'' or ``graded lexicographic'' order) is used.
  Since the defining relations of $\DP(P,\G)$ in 
  \cref{dfn:ppchow} are all homogeneous, their initial terms with respect to the lex or degree-lex orders are the the same. 
  Consequently, they are a Gr\"obner basis with respect to one order if and only if they are with respect to the other.
\end{rmk}
\begin{rmk}\label{rmk:geoProof}
      Suppose $P$ is a polymatroid with building set $\G$, realized by an arrangement $\{V_i\}_i$ in $V$. Let $\{V_{ij}\}_{ij}$ realize $\widetilde P$ as in \cref{rmk:why-geometric}. In this case, an alternate proof of \cref{thm:isomorphism} is possible.
  By \cite[\S4]{FY04}, $A(\Sigma_{P, \G})$ is isomorphic to the Chow ring of the wonderful compactification of $\{V_{ij}\}_{ij}$ with respect to $\widetilde \G$.
  By \cref{rmk:geomOfBS}, the wonderful compactifications of $\{V_{ij}\}_{ij}$ with respect to $\widetilde \G$ and $\{V_i\}_i$ with respect to $\G$ are isomorphic. Hence, their Chow rings are isomorphic. The Chow ring of the latter space is isomorphic to $\DP(P, \G)$ by a comparison of the presentations in \cref{dfn:ppchow} and \cite[Theorem 5.2]{dCP95}. 
\end{rmk}
In the remainder of this section, we recover \cite[Theorem 4.7]{PP21} and generalize \cite[Theorem 4.21]{PP21} using \cref{thm:isomorphism} and the tropical Hodge theory of \cite{ADH20}.
If $R$ is a $\Z$-algebra, define $R_\Q \coloneqq R \otimes_\Z \Q$ and $R_\R$ likewise.
\begin{cor}\label{cor:hodge}
  \newcommand{\K}{\mathbb{K}}
  Let $\K = \Q, \R$.
  Let $P$ be a polymatroid of rank $r$ and $\G$ a geometric building set.
  Let $\ell$ be any $\K$-valued strictly convex piecewise linear function on $\Sigma_{P,\G}$, viewed as an element of $A^1(\Sigma_{P,\G})_\K \cong \DP^1(P,\G)_\K$.
  \begin{enumerate}\itemsep 5pt
  \item \label{poincare} (Poincar\'e duality) There is an isomorphism 
    \[ \deg\colon  \DP(P,\G)^{r-1} \to \Z, \]
    and for all $0 \leq k < r/2$, the pairing
      \[ \DP^k(P,\G) \times \DP^{r-k-1}(P, \G) \to \Z, \quad (a,b) \mapsto \deg(ab)  \]
    is non-degenerate.
\item \label{lefschetz} (Hard Lefschetz) For every $0 \leq k < r/2$, the multiplication map
  \[ \DP^k(P,\G)_\K \to \DP^{r-k-1}(P,\G)_\K, \quad a \mapsto \ell^{r - 2k-1} a \]
  is an isomorphism.
\item \label{hodge-riemann} (Hodge-Riemann) For every $0 \leq k < r/2$, the bilinear form
  \[
    \DP^k(P,\G)_\K \times \DP^k(P,\G)_\K \to \K, \quad (a,b) \mapsto (-1)^k \deg(\ell^{r-2k-1} ab)
  \]
  is positive definite on the kernel of multiplication by $\ell^{r-2k}$.
  \end{enumerate}
\end{cor}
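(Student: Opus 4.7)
The plan is to transfer everything to the Chow ring of the Bergman fan via \cref{thm:isomorphism} and then apply the two heavy inputs from tropical Hodge theory: the K\"ahler package for Bergman fans of matroids \cite{AHK18} and the fact that, for smooth fans, the validity of the K\"ahler package with respect to strictly convex piecewise linear functions depends only on the support \cite{ADH20}.

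First I would use \cref{thm:isomorphism} to identify $\DP(P,\G)_\K$ with $A(\Sigma_{P,\G})_\K$ as graded rings, so that the class of a strictly convex piecewise linear function $\ell$ on $\Sigma_{P,\G}$ can be viewed in $\DP^1(P,\G)_\K$. Strictly convex piecewise linear functions exist because $\Sigma_{P,\G}$ is a subfan of the normal fan of a convex polytope, by \cref{polytope}. The fan $\Sigma_{P,\G}$ is unimodular of pure dimension $r-1$, where $r = \rk(P)$, so the Chow-theoretic setting of \cite{AHK18,ADH20} applies.

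Next, unwinding \cref{dfn:bergmanfan}, the fan $\Sigma_{P,\G}$ is by construction the nested set fan $\Sigma_{\widetilde P, \widetilde \G}$ of the matroid $\widetilde P$ with respect to the building set $\widetilde \G$ produced in \cref{liftedbs}. It is standard that all nested set fans of a matroid $M$, for varying geometric building sets containing the ground set, share a common support, namely the support of the Bergman fan of $M$ in the sense of \cite[Definition 3.2]{AHK18}. Applied to $M = \widetilde P$, this gives $|\Sigma_{P,\G}| = |\Sigma_{\widetilde P}|$.

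Finally, \cite{AHK18} establishes Poincar\'e duality (over $\Z$), Hard Lefschetz, and the Hodge--Riemann relations for $A(\Sigma_{\widetilde P})$ with respect to any strictly convex piecewise linear function on $\Sigma_{\widetilde P}$. By \cite{ADH20}, for smooth unimodular fans the K\"ahler package with respect to strictly convex piecewise linear functions is an invariant of the underlying support. Combining these, the package is inherited by $A(\Sigma_{P,\G})$, and hence by $\DP(P,\G)$ via the isomorphism of \cref{thm:isomorphism}. The main obstacle in this approach is not any new combinatorial computation but a bookkeeping one: one must check carefully that the form of ``K\"ahler package'' supplied by \cite{ADH20}, including integral Poincar\'e duality, is strong enough to yield all three statements of the corollary verbatim, and that the class $\ell$ used on the two sides of the support-invariance equivalence corresponds correctly under restriction of piecewise linear functions.
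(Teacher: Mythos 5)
Your proposal is correct and follows essentially the same route as the paper: identify $\DP(P,\G)$ with $A(\Sigma_{P,\G})$ via \cref{thm:isomorphism}, observe that $\Sigma_{P,\G}$ and $\Sigma_{\widetilde P}$ share the same support (the paper cites the refinement statement of Feichtner--Yuzvinsky for this), and then combine the support-invariance of the K\"ahler package from \cite{ADH20} with its validity for $A(\Sigma_{\widetilde P})$ from \cite{AHK18}. The one bookkeeping point you flag --- that Poincar\'e duality must be checked over $\Z$ --- is exactly the point the paper addresses by noting that the relevant arguments of \cite{ADH20} go through integrally.
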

\begin{proof}
  Let $M = \widetilde P$.
  By \cite[Theorem 4]{FY04}, $\Sigma_M$ is a refinement of $\Sigma_{P,\G}$. In particular, the two fans have the same support.
  By \cite[Proposition 2.4]{AHK18} and \cref{polytope}, both fans are subfans of the normal fans of convex polytopes, so both fans support strictly convex piecewise linear functions. By \cite[Proposition 2]{FY04}, $\Sigma_{P, \mathscr{G}}$ is a smooth fan. 

  The desired statements now follow by applying \cite[Theorem 1.6]{ADH20}, \cite[Proposition 5.2]{AHK18}, and \cite[Theorems 6.19 and 8.8]{AHK18}.
  Loosely, \cite[Theorem 1.6]{ADH20} says properties \cref{poincare}, \cref{lefschetz}, and \cref{hodge-riemann} hold for $A(\Sigma_{P,\G})_\R \cong \DP(P,\G)_\R$ if and only if they hold for $A(\Sigma_M)_\R$, and \cite{AHK18} verifies them for $A(\Sigma_M)_\R$.

  Properties \cref{lefschetz} and \cref{hodge-riemann} for $\DP(P, \G)_\Q$ follow immediately from those for $\DP(P, \G)_\R$.
  For \cref{poincare}, note that \cite{ADH20}'s Poincar\'e duality arguments go through over $\Z$. (Explicitly, one must check statements 6.6--6.9, and Propositions 6.16, 6.17 of \cite{ADH20}.)
\end{proof}
\begin{rmk}\label{ConeRemark}
  In \cite{PP21}, \cref{cor:hodge} is proved for $\ell$ in the \word{$\sigma$-cone} \cite[Definition 4.15]{PP21}, the positive span of 
  \[
    - \sum_{G \in \G_{\geq F}} x_G, \quad F \in \G
  \]
  in $\DP(P, \G)$. The $\sigma$-cone is generally a proper subset of the cone of strictly convex piecewise linear functions on $\Sigma_{P, \G}$.
  For example, if $M$ is a loopless matroid on $E$ and $\G$ is its maximal building set, then for any $i \in E$,
  \[
    \beta \coloneqq \sum_{i \not \in F} z_F = - \sum_{|G| > 1} \big(|G| - 1\big) y_G
  \]
  is in the closure of the cone of strictly convex piecewise linear functions on $\Sigma_M$ \cite[Proposition 4.3, Lemma 9.7]{AHK18}.
  However, $\beta$ may not be in the closure of the $\sigma$-cone, e.g. when $M$ is Boolean of rank at least 3.
  For comparison of the $\sigma$-cone to the ample cone of the wonderful compactification, see \cite[Remark~4.22]{PP21}.
\end{rmk}
\appendix
\section{Combinatorics of the Bergman fan of Boolean polymatroids}
\label{appendix:polytope}

In this appendix we describe the combinatorics of the Bergman fans of 
Boolean polymatroids, proving in particular that they are the normal 
fans of polypermutohedra as stated in the introduction. We also give a description of polypermutohedra as a Minkowski sum of simplices. 
Throughout this appendix, we let $\pi\colon \widetilde{E} \to E$ be a surjective map of 
finite sets, with associated Boolean polymatroid $B(\pi)$ on $E$ given by 
the rank function $\rk_{B(\pi)}(A) = |\pi^{-1}(A)|$ for $A \subseteq E$.
We write $n$ for the cardinality of $E$.

\subsection{The Bergman fan as a configuration space}

\begin{dfn}
  Let $\mathbf{w} = (w_i)_i \in \R^{\widetilde{E}}$ be a weight on the elements of 
  $\widetilde{E}$. Write $\Lowest_\pi(\mathbf{w})$ for the set of $i \in 
  \widetilde{E}$ such that $i$ has minimal weight among the 
  elements of $\pi^{-1}(\pi(i))$ with respect to 
  $\mathbf{w}$. We equip 
  $\Lowest_\pi(\mathbf{w})$ with the natural partial order given by $i \preceq j$ 
  if $w_i \leq w_j$. 
\end{dfn}
Adding a multiple of the all ones vector to $\mathbf{w}$ 
  does not change $\Lowest_\pi(\mathbf{w})$, so $\Lowest_\pi(\mathbf{w})$ is well defined 
  for $\mathbf{w} \in \R^{\widetilde{E}}/\R(1,1,\ldots,1)$.

\begin{lem}\label{conf-space}
  Two points $\mathbf{w},\mathbf{v} \in 
  \mathbb{R}^{\tilde{E}}/(1,1,\ldots,1)$ lie in the relative interior of 
  the same cone of the $\Sigma_{B(\pi)}$ if and only if 
  $\Lowest_\pi(\mathbf v) = \Lowest_\pi(\mathbf w)$ as posets.
\end{lem}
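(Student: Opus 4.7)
The plan is to compute $\Lowest_\pi(\mathbf{w})$ explicitly, as a preorder, for any $\mathbf{w}$ in the relative interior of a cone $\sigma_{\mathscr F, S}$, and then show that this preorder reconstructs the pair $(\mathscr F, S)$.

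Fix a cone $\sigma_{\mathscr F, S}$ with $\mathscr F = \{\emptyset \subsetneq F_1 \subsetneq \cdots \subsetneq F_k \subsetneq E\}$. A direct check (using that $\mathscr F$ is a strict chain of proper subsets and that $S$ contains no fiber of $\pi$) shows that the generators $\e_{\pi^{-1}(F_i)}$ and $\e_j$ ($j \in S$) are linearly independent modulo $\R(1,\ldots,1)$, so each $\mathbf{w}$ in the relative interior of $\sigma_{\mathscr F, S}$ has a unique representative
\[
  \mathbf{w} \;=\; \sum_{i=1}^k c_i\,\e_{\pi^{-1}(F_i)} + \sum_{j \in S} d_j\,\e_j
\]
with $c_i > 0$ and $d_j > 0$. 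Evaluating at $e \in \widetilde E$ yields
\[
  w_e \;=\; A_{\pi(e)} + d_e \cdot [e \in S], \qquad A_j \coloneqq \sum_{i : j \in F_i} c_i,
\]
so within every fiber $\pi^{-1}(j)$ the minimum weight $A_j$ is achieved precisely on $\pi^{-1}(j) \setminus S$, which is nonempty since $S$ omits at least one element of each fiber. Consequently $\Lowest_\pi(\mathbf{w}) = \widetilde E \setminus S$ as a set, and the induced preorder compares elements solely via the values $A_{\pi(\cdot)}$.

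Next, I would recover $\mathscr F$ from the preorder. Define $d(j) \coloneqq \min\{i : j \in F_i\}$, with the convention $d(j) = k+1$ if $j \notin F_k$. Because the $F_i$'s form a chain and all $c_i$ are strictly positive, $A_j = c_{d(j)} + \cdots + c_k$ is strictly decreasing in $d(j)$. Hence the equivalence classes of the preorder on $\Lowest_\pi(\mathbf{w})$, ordered from heaviest to lightest, are
\[
  \pi^{-1}(F_1)\setminus S,\; \pi^{-1}(F_2 \setminus F_1)\setminus S,\; \ldots,\; \pi^{-1}(E \setminus F_k)\setminus S.
\]
From the underlying set we read off $S = \widetilde E \setminus \Lowest_\pi(\mathbf{w})$, and from this totally ordered partition we recover $\mathscr F$ as the chain of $\pi$-images of cumulative prefixes. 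In particular, $\Lowest_\pi(\mathbf{w})$ is constant (as a poset) on the relative interior of $\sigma_{\mathscr F, S}$, and distinct cones produce distinct posets.

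Both directions of the equivalence now fall out. If $\mathbf{v}, \mathbf{w}$ lie in the relative interior of the same cone, the first computation gives $\Lowest_\pi(\mathbf{v}) = \Lowest_\pi(\mathbf{w})$ as posets. Conversely, since $\Sigma_{B(\pi)}$ is a complete fan every point lies in exactly one relative interior, and the reconstruction in the second paragraph shows that equal posets force equal cones. The main obstacle is purely bookkeeping, centered on the linear-independence check that makes the decomposition of $\mathbf{w}$---and hence the formula $w_e = A_{\pi(e)} + d_e[e \in S]$---unique; once that is in hand, the poset structure and the cone combinatorics match transparently.
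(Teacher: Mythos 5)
Your proposal is correct and follows essentially the same route as the paper: the paper's proof simply asserts the characterization of when $\mathbf{w}$ lies in the relative interior of $\sigma_{\mathscr F, S}$ in terms of $\Lowest_\pi(\mathbf{w})$ and notes that each datum determines the other, whereas you derive that characterization explicitly from the unique positive expansion $\mathbf{w}=\sum_i c_i\,\e_{\pi^{-1}(F_i)}+\sum_{j\in S}d_j\,\e_j$. Your version supplies the details (linear independence of the generators, the formula $w_e=A_{\pi(e)}+d_e[e\in S]$, and the reconstruction of $(\mathscr F,S)$ from the ordered partition) that the paper leaves implicit, but the underlying argument is the same.
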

\begin{proof}
  Recall that a cone $\sigma_{\mathscr F, 
  S}$ of $\Sigma_{B(\pi)}$ is specified by a chain of sets $\mathscr F = 
  \{\emptyset \subsetneq F_1 \subsetneq \cdots \subsetneq F_k \subsetneq E\}$ 
  and a set $S \subseteq \widetilde E$ such that $S$ 
  does not contain a fiber of $\pi$.
  The relative interior of $\sigma_{\mathscr F, S}$ contains $\mathbf w$ 
  if and only if the underlying set of $\Lowest_\pi(\mathbf w)$ 
  is equal to $\widetilde E 
  \setminus S$ and $i \prec j$ whenever there exists $r$ 
  such that $i \not\in F_r$ and $j \in F_r$.
Therefore $\Lowest_{P}(\mathbf w)$ can be recovered from $\sigma_{\mathscr{F}, A}$ and vice versa.
\end{proof}

\subsection{The Bergman fan as the normal fan of a polytope}
Recall that an \emph{ordered transversal} of $\pi$ is a sequence 
$s_1, \ldots, s_n$ of elements of $\widetilde{E}$ such that each 
fiber of $\pi$ contains exactly one element of the sequence.
\begin{dfn}
Given real numbers $0 \leq c_1 < c_2 < \ldots < c_n$, define the 
  associated polypermutohedron $Q(\pi; c_1, c_2, \ldots, c_n)$ as the convex hull of the vectors $
\mathbf{v}_{s_1,s_2, \ldots, s_n} \coloneqq c_1 \e_{s_1} + c_2 \e_{s_2} + \ldots 
  + c_n \e_{s_n},
$
where $s_1,s_2, \ldots, s_n$ runs over all ordered transversals of 
  $\pi$ and $\e_{i} \in \R^{\widetilde{E}}$ is the standard basis vector 
  of $i \in \widetilde{E}$.
\end{dfn}

\begin{lem}\label{minima}
  Let $\mathbf{w} = (w_i)_i \in \R^{\widetilde{E}}$ be a weight on the elements of 
  $\widetilde{E}$, and let
  $s_1,s_2,\ldots,s_n$ be an ordered transversal of $\pi$. Denote by 
  $\langle -,- \rangle$ the standard dot product on $\R^{\widetilde{E}}$. 
  Then the linear functional $\langle \mathbf{w}, - \rangle$ achieves its 
  minimum over $Q(\pi; c_1, c_2, \ldots, c_n)$ at the vector $\mathbf 
  v_{s_1, s_2, \ldots, s_n}$ if and only if
\begin{enumerate}\itemsep 5pt
    \item $s_j$ has minimum weight among the elements of 
      $\pi(\pi^{-1}(s_j))$ with respect to $\mathbf{w}$ for all $j$, and 
      \label{it:fastest}
    \item $w_{s_1} \leq w_{s_2} \leq \ldots \leq w_{s_n}$. \label{it:incr}
\end{enumerate}
\end{lem}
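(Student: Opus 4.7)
The plan is to reduce the question to a purely combinatorial optimization over the vertices of $Q(\pi; c_1, \ldots, c_n)$ and then apply two adjacent-swap arguments that together encode the rearrangement inequality. Since $Q$ is a convex polytope whose vertices are contained in $\{\mathbf{v}_{s_1, \ldots, s_n}\}$, any linear functional attains its minimum at such a vertex, where its value is $\sum_{j=1}^n c_j w_{s_j}$.

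For the necessity of \cref{it:fastest}, I would suppose some $s_j$ is not a minimum-weight element of $\pi^{-1}(\pi(s_j))$ and pick $t$ in the same fiber with $w_t < w_{s_j}$. Since $t$ cannot appear elsewhere in the ordered transversal (it shares a fiber with $s_j$), replacing $s_j$ by $t$ produces a new ordered transversal whose functional value differs by $c_j(w_t - w_{s_j})$. This is negative when $c_j > 0$, so the original vertex was not a minimizer; the boundary case $c_1 = 0$ with $j = 1$ needs a brief separate comment, since then the weight of $s_1$ is immaterial. For the necessity of \cref{it:incr}, I would compare the original vertex with the one obtained by transposing $s_j$ and $s_{j+1}$: the difference in functional values is $(c_{j+1}-c_j)(w_{s_j}-w_{s_{j+1}})$, and minimality of the original forces the sign of this expression to be consistent with the claimed inequality.

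For sufficiency, I would show that any two ordered transversals satisfying both conditions assign the same value to $\langle \mathbf{w},-\rangle$. This is because \cref{it:fastest} and \cref{it:incr} pin down each $s_j$ up to permutations of fiber-minima of equal weight and permutations of positions with equal $w_{s_j}$, and every such permutation leaves $\sum c_j w_{s_j}$ unchanged. Combined with the necessity arguments, this yields the ``if and only if'' statement.

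The main obstacle I anticipate is careful handling of ties: both within a fiber (several elements of equal minimum weight) and across fibers (distinct elements of $\Lowest_\pi(\mathbf{w})$ sharing a weight). These are exactly what distinguish a unique-vertex minimum from a higher-dimensional face of minima, and a clean write-up requires tracking them through both the swap arguments and the sufficiency step. None of this is deep, but it needs some bookkeeping to present precisely.
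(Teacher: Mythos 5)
Your proposal follows the same route as the paper's own proof: a within-fiber replacement for condition (i), an adjacent transposition for condition (ii), and a tie-tracking argument showing that every transversal satisfying both conditions gives the same value of the functional, so that sufficiency follows from necessity together with the existence of a minimizing vertex. That structure is sound, and your remark about the boundary case $c_1=0$ is a genuine point the paper passes over (when $c_1=0$ the vertex does not depend on $s_1$, so neither condition is literally necessary for $j=1$).

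The gap is precisely in the one step where you decline to commit to a sign. Writing $\Delta$ for the value after transposing $s_j$ and $s_{j+1}$ minus the value before, one has $\Delta=(c_{j+1}-c_j)(w_{s_j}-w_{s_{j+1}})$, and minimality of the original vertex forces $\Delta\ge 0$, hence $w_{s_j}\ge w_{s_{j+1}}$ because $c_{j+1}>c_j$. That is the \emph{reverse} of condition (ii) as stated. Concretely, take $n=2$ with singleton fibers, $c_1=1$, $c_2=2$, $w_a=5$, $w_b=3$: then $\langle\mathbf{w},\mathbf{v}_{a,b}\rangle=11<13=\langle\mathbf{w},\mathbf{v}_{b,a}\rangle$, so the minimizing vertex has $w_{s_1}>w_{s_2}$. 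By the rearrangement inequality the minimum of $\sum_j c_jw_{s_j}$ pairs the largest coefficient with the smallest weight, so condition (ii) should read $w_{s_1}\ge\cdots\ge w_{s_n}$; your phrase ``forces the sign of this expression to be consistent with the claimed inequality'' conceals exactly the computation that would have revealed this. For what it is worth, the paper's proof stumbles at the same spot: the inequality $c_jw_{s_j}+c_{j+1}w_{s_{j+1}}\le c_jw_{s_{j+1}}+c_{j+1}w_{s_j}$ it derives from minimality is not contradicted by $w_{s_j}>w_{s_{j+1}}$, and the invoked ``fact'' $ab+cd>ac+bd$ for $a>c$, $b>d$ is false as stated, since $ab+cd-ac-bd=(a-d)(b-c)$. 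Once the inequality in (ii) is turned around, your remaining steps (necessity of (i), and the sufficiency argument via equality of values on tied transversals) go through, and the normal-fan proposition downstream is unaffected, since the set of minimizing vertices still determines and is determined by the poset $\Lowest_\pi(\mathbf{w})$.
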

\begin{proof}
  Suppose that $\mathbf v_{s_1, s_2, \ldots, s_n}$ minimizes $\langle 
  \mathbf w, - \rangle$ over $Q(\pi; c_1, c_2, \ldots, c_n)$. Assume contrary to \cref{it:fastest} that there 
  is some $j$ and $r \in \pi(\pi^{-1}(s_j))$ such that $w_{s_j} > 
  w_{r}$. Then replacing $s_j$ with $r$ gives another ordered 
  transversal of $\pi$ whose corresponding vector has smaller dot 
  product with $\mathbf w$:
  \[
    \langle \mathbf{w}, v_{s_1,\ldots, s_j, \ldots, s_n} \rangle = c_1 
    w_{s_1} + \ldots + c_j w_{s_j} + \ldots + c_n w_{s_n} > c_1 
    w_{s_1} + \ldots + c_j w_{r} + \ldots + c_n w_{s_n} = \langle 
    \mathbf{w}, v_{s_1,\ldots, r, \ldots, s_n} \rangle.
  \]
  This proves \cref{it:fastest}. Now assume contrary to \cref{it:incr} that there is some $j$ such that
  $w_{s_j} > w_{s_{j+1}}$. Then switching the order of $s_j$ and 
  $s_{j+1}$ gives another ordered transversal, and thus another vector 
  $\mathbf v' \in Q(\pi; c_1,c_2,\ldots, c_n)$, where by assumption 
  $\langle \mathbf{w}, \mathbf{v}_{s_1,s_2, \ldots, s_n} \rangle \leq 
  \langle \mathbf{w}, \mathbf{v}' \rangle$. Thus $c_j w_{s_j} + 
  c_{j+1}w_{s_{j+1}} \leq c_j w_{s_{j+1}} + c_{j+1}w_{s_{j}}$,
  which contradicts the fact that $ab + cd > ac + bd$ whenever $a > c$ and $b > d$. This proves \cref{it:incr}.

  For the other direction, assume that $\mathbf{w}$ satisfies the two 
  conditions. If $\langle \mathbf{w}, - \rangle$ achieves its minimum 
  over $Q(\pi; c_1, c_2, \ldots, c_n)$ on a vector $\mathbf 
  v_{s_1',s_2',\ldots,s_n'}$, then by the first direction we must have 
  that $w_{s_j} = w_{s_j'}$. Therefore $\langle \mathbf{w},-\rangle$ also achieves it 
  minimum on $\mathbf v_{s_1,s_2,\ldots, s_n}$.
\end{proof}

\begin{prop}\label{prop:normal}
  The inner normal fan of $Q(\pi;c_1,c_2,\ldots,c_n)$, modulo the all ones vector, is $\Sigma_{B(\pi)}$.
\end{prop}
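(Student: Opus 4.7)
The plan is to match relative interiors of cones of the two fans, using the poset $\Lowest_\pi(\mathbf{w})$ as an intermediary. For a weight $\mathbf{w} \in \R^{\widetilde{E}}/\R(1,\ldots,1)$, let $F_\mathbf{w}$ denote the face of $Q \coloneqq Q(\pi;c_1,\ldots,c_n)$ on which $\langle \mathbf{w}, -\rangle$ attains its minimum; since $Q$ is the convex hull of the vectors $\mathbf{v}_{s_1,\ldots,s_n}$, this face is the convex hull of the minimizing vertices. By \cref{minima}, the minimizing vertices are precisely those $\mathbf{v}_{s_1,\ldots,s_n}$ for which $(s_1, \ldots, s_n)$ is an ordered transversal lying in $\Lowest_\pi(\mathbf{w})$ and ordered compatibly with $\preceq$. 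In particular, $F_\mathbf{w}$ is determined by the poset $\Lowest_\pi(\mathbf{w})$ together with its projection to $E$.

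Conversely, I would show that $\Lowest_\pi(\mathbf{w})$ can be recovered from $F_\mathbf{w}$. The underlying set equals $\bigcup \{s_1, \ldots, s_n\}$, taken over all vertices $\mathbf{v}_{s_1,\ldots,s_n}$ of $F_\mathbf{w}$. For $i, j$ in distinct fibers of $\pi$ that both lie in $\Lowest_\pi(\mathbf{w})$, one has $w_i \leq w_j$ if and only if there is a vertex of $F_\mathbf{w}$ placing $i$ at some position $k$ and $j$ at some position $k' \geq k$. Both assertions reduce to the observation that any partial assignment of a few elements of $\Lowest_\pi(\mathbf{w})$ to positions respecting $\preceq$ extends to a full ordered transversal compatible with $\preceq$; this is possible because we can freely pick any element of each remaining fiber's intersection with $\Lowest_\pi(\mathbf{w})$.

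Combining these two steps, the relative interior of the inner normal cone of a face $F$ of $Q$ (modulo the all-ones vector) equals the set $\{\mathbf{w} : \Lowest_\pi(\mathbf{w}) = L_F\}$ for a uniquely determined poset $L_F$. By \cref{conf-space}, this is precisely the relative interior of a cone of $\Sigma_{B(\pi)}$. Hence the inner normal fan of $Q$ and $\Sigma_{B(\pi)}$ induce the same partition of $\R^{\widetilde{E}}/\R(1,\ldots,1)$ into relative interiors of cones, and since each cone is the closure of its relative interior, the two fans coincide.

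I expect the main obstacle to be the bookkeeping in the reverse recovery: one must carefully argue that every prescribed ordering of two elements of $\Lowest_\pi(\mathbf{w})$ allowed by $\preceq$ is actually realized by some ordered transversal compatible with $\preceq$, including handling the case when multiple elements of a single fiber are tied for minimum weight. Once this extension lemma is in hand, the proposition follows immediately from \cref{minima} and \cref{conf-space}.
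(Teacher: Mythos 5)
Your proof is correct and follows the same route as the paper: use \cref{minima} to show that the set of minimizing vertices (equivalently, the face $F_{\mathbf{w}}$) carries exactly the same data as the poset $\Lowest_\pi(\mathbf{w})$, then invoke \cref{conf-space} to match relative interiors of cones. The paper compresses the two-way equivalence into one sentence, whereas you spell out the recovery of $\Lowest_\pi(\mathbf{w})$ from $F_{\mathbf{w}}$ explicitly, but the argument is the same.
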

\begin{proof}
  By Lemma~\ref{minima}, the vertices of $Q(\pi; c_1,c_2,\ldots,c_n)$ correspond to ordered transversals of $\pi$, and the set of vertices on which a given linear functional $\langle \mathbf{w},-\rangle$ achieves its minimum is equivalent to the data of $\Lowest_{P}(\mathbf{w})$.
Therefore the proposition follows by Lemma~\ref{conf-space}.
\end{proof}

\subsection{Minkowski sums of simplices}

For $S \subseteq \tilde{E}$, let $\Delta_{S}$ be the convex hull of the vectors $\e_i$, for $i \in S$. 

\begin{prop}
The polytope $Q(\pi; 1, 2, \dotsc, n)$ is the Minkowski sum $\sum_{\{i, j\} \subseteq E} \Delta_{\pi^{-1}(\{i, j\})}$. 
\end{prop}

In the sum, we allow $i = j$. When $\pi$ is a bijection, this recovers the description of the usual permutohedron as the graphical zonotope of the complete graph. 

\begin{proof}
The proof of Proposition~\ref{prop:normal} shows that the inner normal fan of $\Delta_{\pi^{-1}(S)}$ is a coarsening of $\Sigma_{B(\pi)}$ for any $S \subseteq E$. In particular, the inner normal fan of the Minkowski sum $\sum_{\{i, j\} \subseteq E} \Delta_{\pi^{-1}(\{i, j\})}$ is a coarsening of $\Sigma_{B(\pi)}$. We may then find all vertices of the Minkowski sum by choosing a maximal cone of $\Sigma_{B(\pi)}$ and finding the vertex of the Minkowski sum on which any vector in the interior of this cone achieves its minimum. 

The maximal cones of $\Sigma_{B(\pi)}$ correspond to maximal chains $\mathcal{F} = \{\emptyset \subsetneq F_1 \subsetneq \dotsb \subsetneq F_k \subsetneq E\}$ of subsets of $E$ and subsets $S \subseteq \tilde{E}$ such that $|\pi^{-1}(i) \setminus S| = 1$ for all $i$. This data is equivalent to the data of an ordered transversal $s_1, \dotsc, s_n$ of $\pi$. 
Choose a maximal cone of $\Sigma_{B(\pi)}$ corresponding to an ordered transversal $s_1, \dotsc, s_n$, and choose a vector in the relative interior of this cone. 
We can compute the vertex of the Minkowski sum on which this vector achieves its minimum by adding up the minimal vertices of each summand. 
The minimal vertex of a summand of the form $\Delta_{\pi^{-1}(i)}$ is $\e_{s_k}$, where $k$ is the unique element of $\tilde{E}$ such that $\pi(s_k) = i$. The minimal vertex of a summand of the form $\Delta_{\pi^{-1}(\{i, j\})}$ for $i \not=j$ is $\e_{s_{\ell}}$, where $\ell$ is the smaller index of the two elements of $\pi^{-1}(\{i, j\}) \cap \{s_1, \dotsc, s_n\}$. 
We see that the minimal vertex of the Minkowski sum is $\sum_{i=1}^{n} i \e_{s_i}$, as desired.  
\end{proof}

\begin{remark}
One can deduce from the theory of building sets, e.g., \cite[Proposition 7.5]{P09}, that $\Sigma_{B(\pi)}$ is the normal fan of the Minkowski sum $\sum_{\emptyset \not= S \subseteq E} \Delta_{\pi^{-1}(S)}$. 
\end{remark}

\raggedbottom
\bibliography{arxivv2}
\bibliographystyle{amsalpha}

\end{document}